\newtheorem{remark}{Remark}
\newtheorem{theorem}{Theorem}
\newtheorem{lemma}{Lemma}
\newtheorem{definition}{Definition}
\newtheorem{corollary}{Corollary}
\newtheorem{proposition}{Proposition}
\pgfplotsset{compat=newest} 
\pgfplotsset{plot coordinates/math parser=false}
\newcommand\scalemath[2]{\scalebox{#1}{\mbox{\ensuremath{\displaystyle #2}}}} 
\def\q{2.1}
\def\h{1.8} 
\def\l{3.45} 
\def\ll{3} 
\def\qqq{2.6} 
\def\qq{2.55} 
\begin{document}
\title{\LARGE \bf
A Polarized Temporal Network Model to Study the Spread of Recurrent Epidemic Diseases in a Partially Vaccinated Population
}

\author{Kathinka Frieswijk, Lorenzo Zino, and Ming Cao
\thanks{Some preliminary results have appeared in~\cite{frieswijk2022ecc}, in the form of a conference paper. The authors are with the Engineering and Technology Institute Groningen, University of Groningen, 9747 AG Groningen, The Netherlands, e-mail: \texttt{ \{k.frieswijk,lorenzo.zino,m.cao\}\@rug.nl}. The work was partially supported by the European Research Council (ERC--CoG--771687).
}%
}

\maketitle

\begin{abstract}
\noindent Motivated by massive outbreaks of COVID-19 that occurred even in populations with high vaccine uptake, we propose a novel multi-population temporal network model for the spread of recurrent epidemic diseases. We study the effect of human behavior, testing, and vaccination campaigns on the control of local outbreaks and infection prevalence. Our modeling framework decouples the vaccine effectiveness in protecting against transmission and the development of severe symptoms. Furthermore, the framework accounts for the polarizing effect of the decision to vaccinate and captures homophily, i.e., the tendency of people to interact with like-minded individuals. By means of a mean-field approach, we analytically derive the epidemic threshold. Our theoretical results suggest that, while vaccination campaigns reduce pressure on hospitals, they might facilitate resurgent outbreaks, highlighting the key role that testing campaigns may have in eradicating the disease. Numerical simulations are then employed to confirm and extend our theoretical findings to more realistic scenarios. Our numerical and analytical results agree that vaccination is not sufficient to achieve full eradication, without employing massive testing campaigns or relying on the population's responsibility. Furthermore, we show that homophily plays a critical role in the control of local outbreaks, highlighting the peril of a polarized network structure. \end{abstract}

\section{Introduction}\label{sec:introduction}
\IEEEPARstart{I}{n}  response to the COVID-19 pandemic that emerged in Wuhan, China, in December 2019, an unprecedented effort has been made toward developing a vaccine in a ground-breaking record time~\cite{whovaccintracker}. Although the current COVID-19 vaccines are highly effective against the development of severe symptoms --- thereby reducing the number of deaths and pressure on hospitals --- they provide limited protection against transmission, especially concerning the new variants that became dominant in 2021--22~\cite{tang2022respiratory,siddle2022transmission}. As massive outbreaks have occurred in populations with a high vaccination coverage~\cite{siddle2022transmission}, it begs the following question. \emph{Is it possible to eradicate the pandemic by solely relying on vaccination campaigns? If not, could the implementation of testing campaigns and human behavior be key to achieving such a desirable goal? }

To shed light on the dynamics of the spread of an epidemic disease, it is common to employ mathematical modeling. In particular, epidemic models on networks have emerged as a powerful framework to incorporate the human-to-human interaction and mobility patterns through which diseases are transmitted~\cite{RevModPhys.87.925,nowzari2016analysis,dyncontrolmei,Pare2020review,zinoreview}. 
Among the reasons for their success, we mention that network epidemic models are amenable to developing tractable and analytically rigorous mathematical frameworks to study the spread of epidemic diseases~\cite{qu2017sis,fagnani2019sis,prasse2020epidemics} and design policies to control them~\cite{drakopoulos2016cure}. Moreover, they are amenable to several extensions toward incorporating further real-world features, such as awareness~\cite{sahneh2017contact}. In particular, network epidemic models have been successfully extended and employed to study the effect of vaccination campaigns~\cite{preciado2018vaccine,Tetteh2021,Bongiorno2022}. Recently, network models have been applied to the COVID-19 pandemic, to support governments in defining the most effective control strategies~\cite{parino2021,Carli2020,DellaRossa2020,Chen2020covid}.

Here, motivated by the questions we formulated above, we propose a network model to address the spread of recurrent epidemic diseases, which include, for example, infections caused by viruses that do not convey permanent immunity (e.g., COVID-19) or fast-mutating viruses (e.g., influenza viruses). 
Our model extends the standard susceptible--infected--susceptible (SIS) model by separating mildly symptomatic and severely symptomatic infected individuals into two distinct compartments, where the latter are isolated. The proposed framework includes a responsibility level, reflecting the extent to which mildly symptomatic individuals do not overlook their symptoms and maintain physical distance from others to protect them by avoiding transmission. 
We consider two measures to control the spreading of the infection. First, we consider \textit{vaccination}, where we assume that a fixed part of the population is vaccinated. The effect of vaccination is modeled through the parsimonious addition of two distinct parameters to elegantly decouple the effects of the vaccine on reducing contagion and on protection against severe symptoms. Secondly, we consider the implementation of \textit{testing campaigns}, employed to identify and isolate mildly symptomatic infected individuals. In our network model, we do not rely on any time-scale separation assumption. Hence, we consider a co-evolution of the network of human-to-human interactions and the epidemic spreading. To this aim, we develop our epidemic model using a network formation process inspired by continuous-time activity-driven networks~\cite{ADN,zino1}. 

Despite the consensus among the scientific community that vaccines that are approved by public health authorities are effective and safe, the debate on whether to vaccinate or not is very polarizing, and individuals with similar attitudes toward vaccination prefer to interact with one another~\cite{Bessi2016,schmidt2018polarization,monsted2022characterizing}. Here, we investigate the effect that this phenomenon of \textit{homophily} has on the spread of epidemic diseases by developing a mathematical model that encapsulates such a phenomenon. Specifically, we expand our preliminary effort in~\cite{frieswijk2022ecc} by embedding the proposed model in a multi-population network scenario that accounts for the polarizing effect of the vaccination decision, capturing the tendency of individuals to establish more interactions with like-minded people.

Employing a mean-field approach on a large population~\cite{virusspread,9089218}, we perform a theoretical analysis of our network model and derive a closed-form expression for the epidemic threshold. Such an analytical expression allows for shining light on the roles that network polarization, individuals' responsibility, and vaccination play in controlling local outbreaks. Next, inspired by~\cite{Nadini2020} --- which incorporates preferential contacts in the modeling framework by considering activity-driven networks superimposed on a static backbone network --- we perform numerical simulations while restricting interactions to an underlying social network structure. Here, the parameter values are calibrated on the COVID-19 pandemic. 

Our theoretical results suggest that, while vaccination is beneficial in reducing the number of deaths, its role in controlling local outbreaks is nontrivial, and depends on individuals' responsibility levels and the characteristics of both the vaccine and the infection. Hence, vaccination could in some cases act as a double-edged sword, hindering the complete eradication of the disease, for which the implementation of massive testing campaigns becomes necessary. Our simulations provide further insights into the role of human behavior. Notably, they suggest that responsibility is key; for low responsibility levels, it is impossible to eradicate the infection without the employment of massive testing campaigns. Furthermore, both our theoretical and our numerical results show that a high degree of homophily facilitates the spreading of a disease. This underlines the peril of a polarized network, in which clusters of individuals disregard vaccines and the use of protective measures.

The rest of the paper is organized as follows. In Section~\ref{sec:preliminaries}, we introduce the notation and some mathematical preliminaries. In Section~\ref{sec:model}, we illustrate our modeling framework. Section~\ref{section:mainresults} presents the analysis of the model and our main theoretical results. Section~\ref{sec:simulations} discusses our numerical findings. Section~\ref{Sec:conclusion} concludes the paper and outlines future research.

\section{Notation and preliminaries}\label{sec:preliminaries}

\noindent In this section, we gather some notational conventions and we present some key definitions and properties of stochastic processes, which will be used in the rest of the paper. More details on stochastic processes can be found in~\cite{Bailey1990,levin2006book}.

The set of non-negative real, strictly positive real, and non-negative integer numbers is denoted by $\mathbb{R}_{\ge 0}$, $\mathbb{R}_{> 0}$, and $\mathbb{Z}_{\ge 0}$, respectively. Given a function $x(t)$ with $t \in \mathbb{R}_{\ge 0}$, we define $x(t^+) = \lim_{s \searrow t} x(s)$, and $x(t^-) = \lim_{s \nearrow t} x(s)$. Given an event $E$, we denote by $\mathbb P[E]$ its probability. Given a random variable $X$, we denote by $\mathbb E[X]$ its expected value.

\begin{definition}\label{def:poisson}
A {Poisson clock} with (possibly time-varying) rate $\gamma(t)$ is a continuous-time stochastic process, represented by its counting process $N(t)\in\mathbb Z_{\geq 0}$. Specifically, $N(t)$ is a non-decreasing function that satisfies
\begin{equation}\label{eq:clock}
 \mathbb P[N(t+\Delta t)-N(t)=1]=\int_{t}^{t+\Delta t}\gamma(t)\, \mathrm{d}t+o(\Delta t)\,,
\end{equation}
for $\Delta t\in\mathbb R_{>0}$, where the Landau little-o notation $o(\Delta t)$ is associated with the limit $\Delta t\searrow 0$; hence, $\lim_{\Delta t\searrow 0} \mathbb P[N(t+\Delta t)-N(t)=1]/\Delta t=\gamma(t)$. If $N(t)$ has an increment at time $t\in\mathbb R_{\geq 0}$, we say that the clock ticks at time $t$.
\end{definition}

\begin{proposition}\label{prop:poisson_minimum}
The following two properties hold:
\begin{enumerate}[label=\roman*)]
 \item Let $E$ be an event triggered by the first tick of a set of independent Poisson clocks with rates $\gamma_1(t),\dots,\gamma_\ell(t)$. Then, event $E$ can be equivalently described as triggered by a Poisson clock with rate $\gamma_E(t):=\sum_{h=1}^\ell\gamma_h(t)$.
 \item Let $E$ be an event that occurs when a Poisson clock with rate $\gamma(t)$ ticks, with probability $p\in[0,1]$, independent of the Poisson clock. Then, event $E$ can be equivalently described as triggered by a Poisson clock with rate equal to $\gamma_E:=p\gamma$.
\end{enumerate}
\end{proposition}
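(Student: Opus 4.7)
The plan is to work directly from Definition~\ref{def:poisson}, establishing that in each case the counting process of event $E$ has the correct infinitesimal rate, with all multi-event contributions being subsumed by the $o(\Delta t)$ remainder.

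For part i), I would let $N_1(t),\dots,N_\ell(t)$ denote the independent counting processes associated with the $\ell$ Poisson clocks, and define $N_E(t)$ as the counting process of the first-tick event. In a small window $[t,t+\Delta t]$, the event $\{N_E(t+\Delta t)-N_E(t)=1\}$ occurs exactly when precisely one of the clocks ticks. I would decompose this probability as a union over which single clock fires, using a standard inclusion--exclusion style bound. By independence and by Definition~\ref{def:poisson}, the probability that two or more clocks both tick in the same window is bounded by $\sum_{h<k}(\gamma_h(t)\Delta t+o(\Delta t))(\gamma_k(t)\Delta t+o(\Delta t))=o(\Delta t)$. Hence
\[
\mathbb P[N_E(t+\Delta t)-N_E(t)=1]=\sum_{h=1}^{\ell}\int_{t}^{t+\Delta t}\gamma_h(s)\,\mathrm{d}s+o(\Delta t)=\int_{t}^{t+\Delta t}\gamma_E(s)\,\mathrm{d}s+o(\Delta t),
\]
which is precisely~\eqref{eq:clock} with rate $\gamma_E(t)=\sum_h\gamma_h(t)$.

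For part ii), the argument is shorter. I would condition on whether the underlying Poisson clock ticks in $[t,t+\Delta t]$. Since the Bernoulli thinning (with parameter $p$) is independent of the clock, the probability that the clock ticks and the event $E$ is triggered is $p\cdot \big(\int_t^{t+\Delta t}\gamma(s)\,\mathrm{d}s+o(\Delta t)\big)=\int_t^{t+\Delta t}p\,\gamma(s)\,\mathrm{d}s+o(\Delta t)$, while the probability of two or more ticks in the same window is already $o(\Delta t)$ by Definition~\ref{def:poisson}. Matching this against~\eqref{eq:clock} identifies the rate of $N_E$ as $p\gamma(t)$, as claimed.

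I do not expect any substantive obstacle: both statements follow from the infinitesimal characterization in Definition~\ref{def:poisson}, and the only technical point to be careful about is the bookkeeping of the $o(\Delta t)$ terms that absorb multiple simultaneous ticks in part i). I would therefore be explicit about the independence hypothesis there (needed to factor the joint probabilities of simultaneous ticks), and in part ii) about the independence between the clock and the Bernoulli trial (needed to pull $p$ out of the probability). Everything else is a direct verification of~\eqref{eq:clock}.
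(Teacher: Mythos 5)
Your argument is correct, but note that the paper itself offers no proof of Proposition~\ref{prop:poisson_minimum}: these are the classical superposition and thinning properties of Poisson processes, stated as known facts with a pointer to the standard references cited in Section~\ref{sec:preliminaries}. Your infinitesimal verification against \eqref{eq:clock} is the natural way to supply the missing argument, and both parts go through as you describe. The one point worth tightening is that Definition~\ref{def:poisson} as written only prescribes $\mathbb P[N(t+\Delta t)-N(t)=1]$; to bound the probability that a single clock ticks twice in the window (needed in part ii), and to justify the bound $\mathbb P[\text{clock }h\text{ ticks at least once}]\le \gamma_h(t)\Delta t+o(\Delta t)$ that your inclusion--exclusion step in part i) implicitly uses, you also need $\mathbb P[N(t+\Delta t)-N(t)\ge 2]=o(\Delta t)$. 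This is a standard property of (non-explosive) counting processes of Poisson type and is clearly intended by the definition, but it should be stated explicitly as a hypothesis or invoked from the cited references; once that is in place, your bookkeeping of the $o(\Delta t)$ terms and your use of the two independence assumptions are exactly what is required.
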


\begin{definition}\label{def:markov}
A continuous-time stochastic process $X(t)$ with the state space $\mathcal A$ is a {Markov process} if, for any states $h,k\in\mathcal A$, the transition from $h$ to $k$ is triggered by a Poisson clock with rate $q_{hk}(t)$, independent of the others. 
The transition rates are gathered in the transition rate matrix $Q(t) \in \mathbb{R}^{|\mathcal A| \times |\mathcal A|}$.
\end{definition}

\section{Model}\label{sec:model}

\noindent We extend the traditional network SIS model~\cite{zinoreview} by separating infected individuals into two distinct compartments: i) \textit{infectious} individuals, who are untested and mildly symptomatic, and ii) \textit{quarantined} individuals, who are isolated as a result of a positive test result, or because they are severely symptomatic. In the rest of this section, we will present all the details of our multi-population epidemic model.

\subsection{Multi-population}

We consider a population of $n$ individuals $\mathcal{V} = \{1, \hdots, n\}$, connected through a time-varying undirected network $\mathcal G(t):=(\mathcal V,\mathcal E(t))$, which evolves in continuous time, $t\in\mathbb R_{\geq 0}$. Each individual $j\in\mathcal V$ is characterized by their health state $X_j(t) \in \{ \mathrm{S}, \mathrm{I}, \mathrm{Q}\}$, denoting susceptible, infectious, and quarantined infected individuals, respectively. 

The population is partitioned into two subpopulations according to vaccination status: i) a fully \emph{vaccinated} subpopulation of size $n_{\mathrm{v}}\in \{1,\dots,n-1\}$, and ii) a \emph{non-vaccinated} subpopulation of size $n-n_{\mathrm{v}}$. Without any loss of generality, we assume that individuals $\mathcal V_{\mathrm{v}}:=\{1,\dots,n_{\mathrm{v}}\}$ belong to the first subpopulation, while individuals $\mathcal V_{\mathrm{n}}:=\{n_{\mathrm{v}}+1,\dots,n\}$ belong to the second one. Let $v := \frac{n_{\mathrm{v}}}{n} \in [0,1]$ be the \emph{vaccination coverage} of the population. Thus, the couple $(\mathcal V,v)$ fully characterizes the multi-population model.

The disease is transmitted via pairwise interactions at close distance, which evolve at a timescale that is comparable with the epidemic spreading. Such an interaction is henceforth named \textit{contact}. These moments of contact are modeled via a time-varying undirected network $\mathcal G(t):=(\mathcal V,\mathcal E(t))$, with $t\in \mathbb{R}_{\ge 0}$, where the time-varying edge set $\mathcal{E}(t)$ denotes the interactions between individuals at time $t\in\mathbb{R}_{\ge 0}$. Specifically, if $(j,k)\in\mathcal{E}(t)$, then individuals $i$ and $j$ interact in close proximity (that is, they have contact) at time $t$. As in the case of many infectious diseases, the network formation process and the disease transmission process not only evolve at comparable timescales, but they are also deeply intertwined, as we detail in the following.

\subsection{Network formation}\label{sec:formation}

Interactions are generated in a stochastic fashion, inspired by activity-driven networks in continuous time~\cite{zino2}. To each individual $j \in \mathcal{V}$, we assign a Poisson clock with unit rate, which ticks independently of the other clocks. When the clock ticks, the individual initiates an interaction. To capture an individual's preferred engagement with like-minded individuals from their own subpopulation, we introduce a parameter $\theta \in [0,1)$. Specifically, if the clock associated with individual $j\in\mathcal V$ ticks at time $t\in\mathbb R_{\geq 0}$, then $j$ activates and has an interaction with another individual $k$, which is chosen according to a probabilistic rule: with probability $\theta$, $k$ is selected uniformly at random from $j$'s own subpopulation; whereas with probability $1-\theta$, $k$ is chosen uniformly at random from the entire population.

Whether the individuals interact at close proximity (so the individuals are considered to be in contact) is dependent on the health state and responsibility levels of the individuals involved. In particular, we assume that individuals $j$ who are quarantined ($X_j(t)=\mathrm{Q}$) do not have interactions in close proximity with other individuals. Infectious individuals with mild symptoms ($j:X_j(t)=\mathrm{I}$), instead, are able to be in close proximity with others. Whether they have contact with others depends on the individuals' level of responsibility. Specifically, let $\sigma_j \in [0,1]$ denote the responsibility of individual $j \in \mathcal{V}$. If an infected individual $j$ is mildly symptomatic ($X_j(t)=\mathrm{I}$), they refrain from having contact with probability $\sigma_j$; while with probability $1-\sigma_j$, they disregard their symptoms and do not maintain physical distance while interacting. We assume that the decision to maintain physical distance or not is made independently of the past and of other individuals. For the sake of simplicity, we assume in the following that the responsibility level within a subpopulation is homogeneous, and let $\sigma_j=\sigma_{\mathrm{v}}\in [0,1]$, for all $j\in\mathcal V_{\mathrm{v}}$, and $\sigma_i=\sigma_{\mathrm{n}}\in [0,1]$, for all $i\in\mathcal V_{\mathrm{v}}$, which denote the responsibility level of vaccinated and non-vaccinated individuals, respectively. Nonetheless, heterogeneous responsibility levels across the population can be easily introduced within our modeling framework by considering non-trivial distributions of the responsibility $\sigma_j$ across the population. 

To summarize, if a susceptible individual $j$ ($X_j(t)=\mathrm{S}$) 
activates and selects a mildly symptomatic infectious individual 
$k$ ($X_k(t)=\mathrm{I}$), then they have contact with probability equal to $1-\sigma_k$; whereas if $k$ is susceptible ($X_k(t^-)=\mathrm{S}$), then they always have contact. If a mildly symptomatic infectious individual $j$ ($X_j(t^-)=\mathrm{I}$) activates and interacts with a susceptible individual $k$ ($X_k(t^-)=\mathrm{S}$), then they have contact with probability $1-\sigma_j$; if $k$ is mildly symptomatic too ($X_k(t^-)=\mathrm{I}$), then they have contact with probability $(1-\sigma_j)(1-\sigma_k)$ (i.e., if they both ignore the symptoms). Finally, we recall that quarantined individuals ($j:X_j(t^-)= \mathrm{Q}$) are assumed to always maintain distance and thus they do not establish any contact. If individuals $j$ and $k$ have contact at time $t$, then the ephemeral edge $(j,k)$ is included in the set $\mathcal{E}(t)$, and instantaneously removed from the edge set $\mathcal{E}(t^+)$. 

\subsection{Disease transmission and control}\label{Epidemicmodel}

The evolution of the health state of each individual $j \in \mathcal{V}$ is governed by the following two natural mechanisms (contagion and recovery), and by free testing campaigns, where the intensity of the latter is interpreted as a control input.

{\bf Contagion.} The infection is transmitted via close contact. Furthermore, we assume that vaccination reduces the risk of becoming infected and, if infected, it reduces the risk of developing severe illness. To model these effects, we introduce two parameters: $\gamma_{\mathrm{t}} \in [0,1]$ and $\gamma_{\mathrm{q}} \in [0,1]$, respectively. Specifically, the contagion process acts as follows.

If a susceptible individual $j$ $(X_j(t^-)=\mathrm{S}$) has contact with a mildly symptomatic infectious individual $k$ $(X_k(t^-) =\mathrm{I})$ at time $t$, so $(j,k)\in\mathcal E(t)$, then $j$ becomes infected with \textit{per-contact infection probability} $\lambda \in [0,1]$ if $j$ is not vaccinated. Such a probability is reduced to $\lambda(1-\gamma_{\mathrm{t}})$ if $j$ is vaccinated. 
If the infection is transmitted, then $j$ will either move to health state $\mathrm{I}$ or to $\mathrm{Q}$. Specifically, the individual will become severely symptomatic $(X_j(t^+) =\mathrm{Q})$ with probability $p_{\mathrm{q}}\in [0,1]$ if $j$ is not vaccinated, while this probability is reduced to $p_{\mathrm{q}}(1-\gamma_{\mathrm{q}} )\in [0,1]$ if $j$ is vaccinated. Otherwise, the individual becomes infectious with mild symptoms ($X_j(t^+) =\mathrm{I}$).

{\bf Recovery.} An infected individual $j\in \mathcal{V}$ with $X_j(t^-) \in\{ \mathrm{I}, \mathrm{Q} \}$ spontaneously recovers when a Poisson clock with rate $\beta \in \mathbb{R}_{>0}$ ticks, thereby becoming susceptible again to the disease $(X_j(t^+)=\mathrm{S})$.

{\bf Testing.} Free testing campaigns are implemented to induce mildly symptomatic infectious individuals to get tested. To model the effect of free testing, we employ a Poisson clock with rate $\tau \in \mathbb{R}_{\ge 0} $, representing the rate of testing. Hence, an infectious individual $j$ with mild symptoms $(X_j(t^-)=\mathrm{I})$ receives a diagnosis when a Poisson clock with rate $\tau$ ticks. After being diagnosed, $j$ goes in quarantine $(X_j(t^+)=\mathrm{Q})$ and maintains physical distance with respect to other individuals until recovery takes place.

\begin{figure}
\vspace{8pt}
\centering
 \begin{tikzpicture} \node[draw=red, fill=red!10,circle, ultra thick,,minimum size=0.8cm] (I) at (3,1.5) {\large $\mathrm{I}$};
\node[draw=green, fill=green!10,circle, ultra thick,,minimum size=0.8cm] (S) at (0,1.50) {\large $\mathrm{S}$};
\node[draw=blue, fill=blue!10,circle, ultra thick,,minimum size=0.8cm] (Q) at (6,1.50) {\large $\mathrm{Q}$};

\path [->,>=latex,thick] (S) edge[bend left =25] node [below=-2pt] {{$\kappa_{{\alpha},j}$}} (I);
\path [->,>=latex,thick] (S) edge[bend left =25] node [above=-2pt] {{$\nu_{{\alpha},j}$}} (Q);
\path [->,>=latex,thick] (I) edge node [above=-2pt] {{$\tau$}} (Q);
\path [->,>=latex,thick] (I) edge[bend left =25] node [above=-2pt] {{$\beta$}} (S);
\path [->,>=latex,thick] (Q) edge[bend left =25] node [above=-3pt] {{$\beta$}} (S);
\end{tikzpicture}
\caption{ State transitions of the epidemic model for individual $j\in\mathcal V$, who belongs to subpopulation $\mathcal{V}_{\alpha}$ with $\alpha\in\{\mathrm{n},\mathrm{v}\}$. }
 \label{model} 
 \vspace{2pt}
\end{figure}
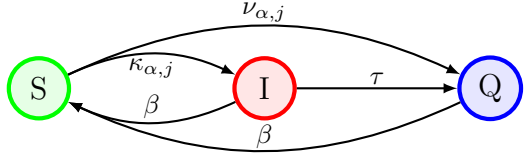

\noindent All the parameters are summarized in Table~\ref{tab:parameter}.

Before presenting the analysis of the dynamics and our main theoretical results, we would like to comment that our modeling framework is amenable to several extensions and generalizations, toward even more realistic settings. 

\begin{remark}\label{rem:npi}
Our model fits the implementation of non-pharmaceutical interventions by introducing a parameter $\eta\in[0,1]$ that captures their effectiveness in reducing the per-contact infection probability, similar to~\cite{frieswijk2022ecc,parino2021}. All our analytical findings are readily extended to this scenario by replacing all the occurrences of $\lambda$ with $(1-\eta)\lambda$.
\end{remark}

\begin{remark}\label{rem:network}
Our modeling framework can incorporate an underlying time-invariant network $\mathcal H=(\mathcal V,\mathcal F)$, which constrains the interactions that $j\in \mathcal{V}$ can have, similar to~\cite{karsai2014backbone,surano2019backbone,Nadini2020}. Specifically, rather than letting an individual $j$ select someone uniformly at random from the network or the own subpopulation, one can instead restrict the selection (uniform, at random) to the neighbor set of $j$, i.e., $\mathcal N_j:=\{k:(j,k)\in\mathcal F\}$, or to individuals $k \in \mathcal{N}_j$ who belong to the same subpopulation, respectively. 
\end{remark}

\begin{remark}\label{rem:sigma}
The responsibility level $\sigma_j$, $j \in \mathcal{V}$, which is assumed to be constant here, may in general be time-varying, influenced by the epidemic spreading and by co-evolving opinion formation processes. Opinion-dynamics~\cite{Xuan2020opinion,She2022opinion} or game-theoretic update mechanisms~\cite{ye2021game,hota2022,frieswijk2022mean} could be directly incorporated within our framework to model such a temporal evolution.
\end{remark}

While the extension in Remark~\ref{rem:npi} is straightforward, the other generalizations may complicate or hinder the analytical tractability of the system. For this reason, in the next sections, we will perform the theoretical analysis of the system while adhering to the original model implementation. The generalization discussed in Remark~\ref{rem:network} will subsequently be investigated in Section~\ref{sec:simulations}, via numerical simulations.

\subsection{Dynamics}\label{sec:dynamics}

All the mechanisms described in Section~\ref{Epidemicmodel} are induced by independent Poisson clocks, which implies that the evolution of the $n$-dimensional state 
 $X(t) : = \left[ X_1(t), X_2(t) , \dots, X_n(t) \right]\in\{ \mathrm{S},\mathrm{I},\mathrm{Q}\}^{n}$ 
is governed by a continuous time Markov process~\cite{levin2006book}.

Each individual is able to experience five distinct state transitions, illustrated in Fig.~\ref{model}, which are triggered by the processes of contagion, testing, and recovery. The two transitions triggered by recovery (from $\mathrm{I}$ and $\mathrm{Q}$ to $\mathrm{S}$) and the one triggered by testing (from $\mathrm{I}$ to $\mathrm{Q}$) solely involve spontaneous mechanisms. Hence, these three transition rates are simply given by the rates of the corresponding Poisson processes that trigger them. On the contrary, the two transitions induced by contagion (from $\mathrm{S}$ to $\mathrm{I}$ and $\mathrm{Q}$) depend on the vaccination status of the individual, and are also related to interactions between individuals and the health states of the others. Hence, they have a non-trivial time-varying expression, which is computed in the following. For the sake of readability, we omit to stress that the rates and the health states of individuals are functions of time. 


\begin{table}
\centering
\caption{Model and control parameters.}\label{tab:parameter}
\begin{tabular}{r| l}
$n\in\mathbb N$& population size\\
$t\in\mathbb R_{\geq 0}$& time\\
$X_j(t)$& health state of individual $j\in\mathcal V$ at time $t$\\
$(\mathcal V,\mathcal E(t))$& contact network (interactions in close proximity)\\
$v\in(0,1)$& vaccination coverage\\
$\mathcal V_{\mathrm{v}}\subset \mathcal V$& vaccinated subpopulation\\
$\mathcal V_{\mathrm{n}}\subset \mathcal V$& non-vaccinated subpopulation\\
$\sigma_{\mathrm{v}}\in[0,1]$ &responsibility level of vaccinated individuals\\
$\sigma_{\mathrm{n}}\in[0,1]$ &responsibility level of non-vaccinated individuals\\
$\lambda\in[0,1]$& per-contact infection probability\\
$p_{\mathrm{q}}\in[0,1]$& probability of severe illness\\
$\beta\in\mathbb R_{>0}$& recovery rate\\
$\gamma_{\mathrm{t}}\in[0,1]$&effectiveness of vaccine against transmission\\
$\gamma_{\mathrm{q}}\in[0,1]$&effectiveness of vaccine against severe illness\\
$\tau \in\mathbb R_{\geq 0}$&testing rate\\
$\theta \in [0,1)$ & homophily level with respect to interactions
\end{tabular}
\end{table}

\begin{proposition}\label{prop:contagionrates}
 A non-vaccinated individual $j$ with $X_j(t^-) = \mathrm{S}$ becomes infectious ($X_j(t^+) = \mathrm{I}$) according to a Poisson clock with rate
 \begin{align}\label{kappauv}
 \kappa_{\mathrm{n},j} := & 2\lambda \left(1-p_{\mathrm{q}}\right) \Bigg[(1-\sigma_{\mathrm{n}}) \left(\frac{\theta}{n(1-v)-1} + \frac{1-\theta}{n-1} \right) \sum\limits_{\substack{k\in\mathcal V_{\mathrm{n}}: X_k= \mathrm{I}}} 1+(1-\sigma_{\mathrm{v}}) \frac{1-\theta}{n-1}\sum\limits_{\substack{k\in\mathcal V_{\mathrm{v}}: X_k= \mathrm{I}} } 1\Bigg],
\end{align}
while they become quarantined ($X_j(t^+) = \mathrm{Q}$) with rate 
 \begin{align}\label{nuuv}
 \nu_{\mathrm{n},j} := & 2 \lambda p_{\mathrm{q}}\Bigg[(1-\sigma_{\mathrm{n}}) \left(\frac{\theta}{n(1-v)-1} + \frac{1-\theta}{n-1} \right) \sum\limits_{\substack{k\in\mathcal V_{\mathrm{n}}: X_k= \mathrm{I}} } 1+(1-\sigma_{\mathrm{v}}) \frac{1-\theta}{n-1}\sum\limits_{\substack{k\in\mathcal V_{\mathrm{v}}: X_k= \mathrm{I}} } 1\Bigg].
\end{align}
A susceptible vaccinated individual $j$ ($X_j(t^-) = \mathrm{S}$) becomes infectious ($X_j(t^+) = \mathrm{I}$) according to a Poisson clock with rate 
\begin{align}\label{kappav}
 &\kappa_{\mathrm{v},j} := 2\lambda (1-\gamma_{\mathrm{t}})\left(1-p_{\mathrm{q}}(1-\gamma_{\mathrm{q}})\right) \Bigg[(1-\sigma_{\mathrm{n}}) \frac{1-\theta}{n-1} \sum\limits_{\substack{k\in\mathcal V_{\mathrm{n}}: X_k= \mathrm{I}} } 1 +
 (1-\sigma_{\mathrm{v}})\left( \frac{\theta}{nv-1} + \frac{1-\theta}{n-1} \right)\sum\limits_{\substack{k\in\mathcal V_{\mathrm{v}}: X_k= \mathrm{I}} } 1\Bigg],
 \end{align}
while they become quarantined ($X_j(t^+) = \mathrm{Q}$) with rate 
 \begin{align}\label{nuv}
 \nu_{\mathrm{v},j} := & 2\lambda (1-\gamma_{\mathrm{t}})p_{\mathrm{q}}(1-\gamma_{\mathrm{q}})\Bigg[(1-\sigma_{\mathrm{n}}) \frac{1-\theta}{n-1}\sum\limits_{\substack{k\in\mathcal V_{\mathrm{n}}: X_k= \mathrm{I}} } 1  +
 (1-\sigma_{\mathrm{v}})\left( \frac{\theta}{nv-1} + \frac{1-\theta}{n-1} \right)\sum\limits_{\substack{k\in\mathcal V_{\mathrm{v}}: X_k= \mathrm{I}} } 1\Bigg].
\end{align}
\end{proposition}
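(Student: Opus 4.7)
The plan is to derive the four transition rates by assembling, for each susceptible individual $j$ and each infectious individual $k$, the effective Poisson rate of the composite event ``$j$ has a contact with $k$ that transmits the disease and lands $j$ in the target state $\mathrm{I}$ (resp.\ $\mathrm{Q}$)'', and then summing these contributions over all infectious $k$ via Proposition~\ref{prop:poisson_minimum}(i). The dependence on $j$'s vaccination status enters only through the per-contact transmission probability and the conditional split between $\mathrm{I}$ and $\mathrm{Q}$, so the same template will yield all four rates.

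For a fixed pair $(j,k)$ with $X_k(t^-)=\mathrm{I}$, I will decompose the event ``an interaction between $j$ and $k$ is initiated at time $t$'' into two mutually exclusive sub-events: (a) $j$'s Poisson clock (rate $1$) ticks and $j$ selects $k$, and (b) $k$'s Poisson clock (rate $1$) ticks and $k$ selects $j$. A direct computation based on the selection rule of Section~\ref{sec:formation} shows that in both sub-events the selection probability coincides: it equals $\frac{\theta}{n(1-v)-1}+\frac{1-\theta}{n-1}$ when $j,k\in\mathcal V_{\mathrm{n}}$, equals $\frac{\theta}{nv-1}+\frac{1-\theta}{n-1}$ when $j,k\in\mathcal V_{\mathrm{v}}$, and equals $\frac{1-\theta}{n-1}$ when $j$ and $k$ lie in different subpopulations. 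Since the two clocks are independent, Proposition~\ref{prop:poisson_minimum}(i) merges (a) and (b) into a single Poisson clock whose rate is twice this selection probability; this produces the prefactor $2$ appearing in~\eqref{kappauv}--\eqref{nuv}.

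Next, I will condition on the initiated interaction actually resulting in a contact and then in a state transition of the prescribed type. Each of the following three filters is independent of the underlying clocks and of the other filters: the infectious partner's refrain-from-contact decision contributes a factor $1-\sigma_k$, since by the rules of Section~\ref{sec:formation} only the infectious party's responsibility matters when the other endpoint is susceptible; the per-contact transmission probability contributes $\lambda$ for non-vaccinated $j$ and $\lambda(1-\gamma_{\mathrm{t}})$ for vaccinated $j$; the severity split contributes $1-p_{\mathrm{q}}$ vs.\ $p_{\mathrm{q}}$ for non-vaccinated $j$, and $1-p_{\mathrm{q}}(1-\gamma_{\mathrm{q}})$ vs.\ $p_{\mathrm{q}}(1-\gamma_{\mathrm{q}})$ for vaccinated $j$. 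Applying Proposition~\ref{prop:poisson_minimum}(ii) in cascade will collapse all three filters into a single effective rate for the composite event ``$j$ transitions from $\mathrm{S}$ to the target state through $k$''.

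Finally, I will partition the sum over infectious $k$ according to whether $k\in\mathcal V_{\mathrm{n}}$ or $k\in\mathcal V_{\mathrm{v}}$, substitute the matching selection probability and responsibility in each part, and invoke Proposition~\ref{prop:poisson_minimum}(i) once more to add the per-$k$ contributions. Reading off the resulting expression for each of the four combinations of ($j$'s vaccination status, target state $\mathrm{I}$ or $\mathrm{Q}$) reproduces~\eqref{kappauv}--\eqref{nuv}. The main bookkeeping obstacle is the homophilic regime $\theta>0$, in which the selection probability becomes subpopulation-dependent and the symmetry between ``$j$ picks $k$'' and ``$k$ picks $j$'' must be verified in each of the three configurations (both in $\mathcal V_{\mathrm{n}}$, both in $\mathcal V_{\mathrm{v}}$, or mixed); the remainder of the argument is a mechanical cascade of the two properties of Proposition~\ref{prop:poisson_minimum}.
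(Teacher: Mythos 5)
Your proposal is correct and follows essentially the same route as the paper's proof: both decompose each potential infection into activation (unit-rate clock), selection probability (with the homophily-dependent terms $\tfrac{\theta}{n(1-v)-1}$, $\tfrac{\theta}{nv-1}$, or $\tfrac{1-\theta}{n-1}$), the infectious party's contact decision ($1-\sigma_k$), transmission ($\lambda$ or $\lambda(1-\gamma_{\mathrm{t}})$), and the severity split, combining everything via the two items of Proposition~\ref{prop:poisson_minimum}. The only cosmetic difference is that you merge the two initiation directions pairwise before summing over $k$, whereas the paper aggregates by the four event types first; the symmetry check you flag for the homophilic regime is exactly what justifies the paper's factor of $2$ as well.
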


\begin{proof}
Let us focus on the transition rates of a susceptible non-vaccinated individual $j$. Such transitions are triggered by moments of contact between $j$ and an infectious individual, which take place if one of the following events occurs: i) $j$ activates, interacts with an infected individual in $\mathcal V_{\mathrm n}$, who decides to have contact (i.e., to disregard physical distance); ii) an infected individual in $\mathcal V_{\mathrm n}$ activates and interacts with $j$, where the former decides to have contact; iii) $j$ activates, interacts with an infected individual in $\mathcal V_{\mathrm v}$, who decides to have contact; iv) an infected individual in $\mathcal V_{\mathrm v}$ activates, interacts with $j$, where the former decides to have contact. 

We observe that i) takes place if three independent events occur: first, the activation of $j$, which is triggered by a Poisson clock with unit rate; second, the selection of an infected individual $k\in\mathcal V_{\mathrm n}$, which occurs with probability equal to
\begin{equation}
 \theta\cdot \frac{1}{n(1-v)-1}\sum\limits_{\substack{k\in\mathcal V_{\mathrm{n}}: X_k= \mathrm{I}}}1 + (1-\theta)\cdot\frac{1}{n-1}\sum\limits_{\substack{k\in\mathcal V_{\mathrm{n}}: X_k= \mathrm{I}}}1,
\end{equation}
where the first term accounts for interactions within the sub-population, and the second one for random interactions; and third, the decision of $k$ of having contact, which occurs with probability equal to $1-\sigma_{\mathrm{n}}$. Item ii) of Proposition~\ref{prop:poisson_minimum} allows to compute the total rate associated with event i). Similarly, we compute the rate associated with ii), iii), and iv). Then, using item i) of Proposition~\ref{prop:poisson_minimum}, we compute 
the rate corresponding to the occurrence of contact between $j$ and an infected individual, which is equal to 
\begin{align}
 2\Bigg[\frac{\theta}{n(1-v)-1} &+ \frac{1-\theta}{n-1}\Bigg]\sum_{{k\in\mathcal V_{\mathrm{n}}: X_k= \mathrm{I}} } (1-\sigma_{\mathrm{n}}) +2 \frac{1-\theta}{n-1}\sum_{{k\in\mathcal V_{\mathrm{v}}: X_k= \mathrm{I}} } (1-\sigma_{\mathrm{v}}).
\end{align}
Finally, we observe that a non-vaccinated individual $j$ becomes infectious ($\mathrm{I}$) after contact with an infected individual with probability $\lambda \left(1-p_{\mathrm{q}}\right)$, while $j$ becomes quarantined ($\mathrm{Q}$) with probability $ \lambda p_{\mathrm{q}}$. Using again item ii) of Proposition~\ref{prop:poisson_minimum} yields \eqref{kappauv} and \eqref{nuuv}. In a similar way, we derive \eqref{kappav} and \eqref{nuv} for a susceptible vaccinated individual.
\end{proof}

For a generic individual $j\in \mathcal{V}$, 
the transition rate matrix of the Markov process $X_j(t)$ is given by
\begin{equation}
  Q_{\alpha,j} = \begin{bmatrix}
 -\kappa_{\alpha,j}-\nu_{\alpha,j} & \kappa_{\alpha,j} & \nu_{\alpha,j}\\
 \beta & -\beta-\tau& \tau \\
 \beta & 0 & -\beta
 \end{bmatrix}, \label{Q} 
\end{equation}
where $\alpha\in\{\mathrm{n},\mathrm{v}\}$ is the vaccination status of $j$, and the rows (columns) correspond to states $\mathrm{S}$, $\mathrm{I}$, and $\mathrm{Q}$, respectively.

Observe that the first row of the matrix is dependent on the states of the other population members, so it is impossible to decouple the individual dynamics. For a large-scale population, this impedes the analysis, since the dimension of the state space $X(t)$ increases exponentially with $n$. Hence, as is standard practice~\cite{virusspread,9089218}, we focus on a mean-field relaxation of the system. Specifically, rather than studying the evolution of the state of each individual $j\in\mathcal V$, we are studying the evolution of the probabilities that $j$ is in a certain state, for each $ j \in\mathcal V$. 

For a vaccination status $\alpha \in \{\mathrm{n}, \mathrm{v}\}$ and any individual $j \in \mathcal{V}$, we define the probability that $j$ is susceptible, infectious, or quarantined as 
\begin{equation}\label{eq:probabilities}s_{\alpha,j} (t) : = \mathbb{P}\left[ X_j(t) = \mathrm{S}, j\in\mathcal V_{\alpha} \right], \quad  i_{\alpha,j} (t)  : = \mathbb{P}\left[ X_j(t) = \mathrm{I}, j\in\mathcal V_{\alpha} \right], \quad \text{and} \quad  q_{\alpha,j} (t) : = \mathbb{P}\left[ X_j(t) = \mathrm{Q}, j\in\mathcal V_{\alpha} \right],\\ \end{equation}
respectively, where $s_{\mathrm{v},j} (t)=i_{\mathrm{v},j} (t)=q_{\mathrm{v},j} (t)=0$, if $j\in\mathcal V_{\mathrm{n}}$, and $s_{\mathrm{n},j} (t)=i_{\mathrm{n},j} (t)=q_{\mathrm{n},j} (t)=0$ if $j\in\mathcal V_{\mathrm{v}}$.

\section{Analysis and main results}\label{section:mainresults}

\noindent In this section, we will present the dynamics in the mean-field approximation (in Section~\ref{subsection:meanfield}), and 
do a rigorous analysis of the system (in Section~\ref{subsection:mainresults}), to shed light on the role of vaccination in the spreading of an epidemic disease. 

\subsection{Mean-field dynamics}\label{subsection:meanfield}

In the mean-field approach, following~\cite{virusspread,9089218}, we study the evolution of the expected dynamics for the probabilities in \eqref{eq:probabilities}, that is, $( \dot{s}_{\mathrm{v},j} \ \dot{i}_{\mathrm{v},j} \ \dot{q}_{\mathrm{v},j} ) = ( s_{\mathrm{v},j} \ i_{\mathrm{v},j} \ q_{\mathrm{v},j} ) \mathbb E[Q_{\mathrm{v},j}]$, and $( \dot{s}_{\mathrm{n},j} \ \dot{i}_{\mathrm{n},j} \ \dot{q}_{\mathrm{n},j} ) = ( s_{\mathrm{n},j} \ i_{\mathrm{n},j} \ q_{\mathrm{n},j} )\mathbb E[Q_{\mathrm{n},j}]$. Doing so yields the following dynamical system.

\begin{proposition}\label{prop:mean}
In the mean-field approximation, \eqref{eq:probabilities} follows
\begin{equation}\label{diffeq}
 \begin{alignedat}{2}
 & \dot{s}_{\mathrm{n},j} = && -  \lambda \bar{\alpha}_{\mathrm{n},j}s_{\mathrm{n},j} 
  + \beta i_{\mathrm{n},j} + \beta q_{\mathrm{n},j}, \\
 & \dot{i}_{\mathrm{n},j} = &&  \lambda [1-p_{\mathrm{q}}]\bar{\alpha}_{\mathrm{n},j} s_{\mathrm{n},j} - (\beta +\tau) i_{\mathrm{n},j}, \\
 &\dot{q}_{\mathrm{n},j} = &&  \lambda p_{\mathrm{q}} \bar{\alpha}_{\mathrm{n},j} s_{\mathrm{n},j} + \tau i_{\mathrm{n},j} - \beta q_{\mathrm{n},j}, \\
 & \dot{s}_{\mathrm{v},j} = && - \lambda\left(1-\gamma_{\mathrm{t}} \right) \bar{\alpha}_{\mathrm{v},j} s_{\mathrm{v},j} + \beta i_{\mathrm{v},j} + \beta q_{\mathrm{v},j}, \\
 & \dot{i}_{\mathrm{v},j} = &&  \lambda\left(1-\gamma_{\mathrm{t}} \right)\left[1-p_{\mathrm{q}}\left(1-\gamma_{\mathrm{q}} \right)\right]\bar{\alpha}_{\mathrm{v},j} s_{\mathrm{v},j}- (\beta + \tau) i_{\mathrm{v},j}, \\
 &\dot{q}_{\mathrm{v},j} = && \lambda\left(1-\gamma_{\mathrm{t}} \right)p_{\mathrm{q}}\left(1-\gamma_{\mathrm{q}} \right) \bar{\alpha}_{\mathrm{v},j} s_{\mathrm{v},j}+ \tau i_{\mathrm{v},j} - \beta q_{\mathrm{v},j}, 
\end{alignedat} 
\end{equation}
for all $j\in\mathcal V$, where
\begin{align}
 \bar{\alpha}_{\mathrm{n},j} : = & 2(1-\sigma_{\mathrm{n}})\left[ \frac{\theta}{n(1-v)-1} + \frac{1-\theta}{n-1} \right]\sum\limits_{k \in \mathcal{V} \setminus \{j\} } i_{\mathrm{n},k} +2(1-\sigma_{\mathrm{v}}) \frac{1-\theta}{n-1}\sum\limits_{k \in \mathcal{V} \setminus \{j\}} i_{\mathrm{v},k}, \label{barinteractionunv}\\
 \bar{\alpha}_{\mathrm{v},j} : = & 2(1-\sigma_{\mathrm{n}}) \frac{1-\theta}{n-1}\sum\limits_{k \in \mathcal{V} \setminus \{j\} } i_{\mathrm{n},k} + 2(1-\sigma_{\mathrm{v}})\left[ \frac{\theta}{nv-1} + \frac{1-\theta}{n-1} \right]\sum\limits_{k \in \mathcal{V} \setminus \{j\} } i_{\mathrm{v},k}.\label{barinteractionv}
\end{align}
\end{proposition}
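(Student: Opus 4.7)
The derivation is essentially algebraic once the mean-field closure stated at the top of Section~\ref{subsection:meanfield} is accepted, namely $(\dot{s}_{\alpha,j}\ \dot{i}_{\alpha,j}\ \dot{q}_{\alpha,j}) = (s_{\alpha,j}\ i_{\alpha,j}\ q_{\alpha,j})\,\mathbb{E}[Q_{\alpha,j}]$. This closure replaces the joint law of the population states by a product of marginals, so that the random contagion rates $\kappa_{\alpha,j}$ and $\nu_{\alpha,j}$ (which couple $X_j$ to the states of other individuals) enter the equations only through their expectations. This is the standard mean-field relaxation used in network epidemic analysis, and it reduces the proof to computing $\mathbb{E}[Q_{\alpha,j}]$ and expanding a $1\times 3$-by-$3\times 3$ matrix product.

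To compute $\mathbb{E}[Q_{\alpha,j}]$ from \eqref{Q}, I would note that every entry is deterministic except $\kappa_{\alpha,j}$ and $\nu_{\alpha,j}$. By Proposition~\ref{prop:contagionrates}, each of these is a linear combination of the random sums $\sum_{k\in\mathcal V_{\alpha}:\,X_k=\mathrm I}1$, so linearity of expectation gives $\mathbb{E}\bigl[\sum_{k\in\mathcal V_\alpha:\,X_k=\mathrm I}1\bigr] = \sum_{k\in\mathcal V_\alpha} i_{\alpha,k}$. Since individuals cannot interact with themselves, the term $k=j$ is physically excluded; combining this with the convention $i_{\alpha,k}=0$ whenever $k\notin\mathcal V_\alpha$ from \eqref{eq:probabilities}, these sums collapse to $\sum_{k\in\mathcal V\setminus\{j\}} i_{\alpha,k}$, which is precisely the form appearing in \eqref{barinteractionunv}-\eqref{barinteractionv}. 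Substituting into \eqref{kappauv}-\eqref{nuv} then identifies
\begin{equation*}
\mathbb{E}[\kappa_{\mathrm n,j}] = \lambda(1-p_{\mathrm q})\bar{\alpha}_{\mathrm n,j}, \qquad \mathbb{E}[\nu_{\mathrm n,j}] = \lambda p_{\mathrm q}\bar{\alpha}_{\mathrm n,j},
\end{equation*}
and analogously $\mathbb{E}[\kappa_{\mathrm v,j}] = \lambda(1-\gamma_{\mathrm t})[1-p_{\mathrm q}(1-\gamma_{\mathrm q})]\bar{\alpha}_{\mathrm v,j}$ together with $\mathbb{E}[\nu_{\mathrm v,j}] = \lambda(1-\gamma_{\mathrm t})p_{\mathrm q}(1-\gamma_{\mathrm q})\bar{\alpha}_{\mathrm v,j}$.

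The final step is the matrix-vector multiplication itself: plugging the four expectations above into the random entries of \eqref{Q} and expanding $(s_{\alpha,j}\ i_{\alpha,j}\ q_{\alpha,j})\,\mathbb{E}[Q_{\alpha,j}]$ column by column yields the three ODEs of \eqref{diffeq} for vaccination status $\alpha$. I would carry out the non-vaccinated case $\alpha=\mathrm n$ explicitly, with the vaccinated case $\alpha=\mathrm v$ obtained by the same bookkeeping with different rate prefactors. I do not foresee a significant obstacle: once the mean-field closure is accepted, the proof is mechanical. The only step that deserves a brief comment is the reduction of the indicator sums from $\{k\in\mathcal V_\alpha:\,X_k=\mathrm I\}$ to $\mathcal V\setminus\{j\}$, which rests on the self-exclusion of $j$ and the zero convention in \eqref{eq:probabilities}.
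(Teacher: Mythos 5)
Your proposal is correct and follows essentially the same route as the paper's proof: the paper likewise reduces everything to the identity $\mathbb E\bigl[\sum_{k\in\mathcal V_{\alpha}:X_k=\mathrm I}1\bigr]=\sum_{k\in\mathcal V\setminus\{j\}}i_{\alpha,k}$ and then substitutes into the expected dynamics $(\dot s_{\alpha,j}\ \dot i_{\alpha,j}\ \dot q_{\alpha,j})=(s_{\alpha,j}\ i_{\alpha,j}\ q_{\alpha,j})\,\mathbb E[Q_{\alpha,j}]$. Your additional remarks on linearity of expectation, the self-exclusion of $j$, and the zero convention in \eqref{eq:probabilities} only make explicit what the paper leaves implicit.
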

\begin{proof}
When computing the entries $\mathbb E[\kappa_{\alpha,j}]$ and $\mathbb E[\nu_{\alpha,j}]$ of $\mathbb E[Q_{\alpha,j}]$, note that for any susceptible individual $j \in \mathcal{V}$ ($X_j(t)=\mathrm{S}$), it holds that
\begin{equation}\label{eq:mean_state}
 \mathbb E\scalemath{0.82}{\left[\sum\limits_{\substack{k\in\mathcal V_{\alpha}: X_k= \mathrm{I}} } 1\right]} =\sum\limits_{\substack{k\in\mathcal V\setminus\{j\}} } i_{\alpha,k}, 
\end{equation}
 where $\alpha\in\{\mathrm{n},\mathrm{v}\}$. The rest of the proof is obtained by substituting \eqref{eq:mean_state} into the expected dynamics.
\end{proof}

We will now show that the system in \eqref{diffeq} is well-defined by showing that $\left(s_{\mathrm{n},j} \ i_{\mathrm{n},j} \ q_{\mathrm{n},j} \ s_{\mathrm{v},j} \ i_{\mathrm{v},j} \ q_{\mathrm{v},j} \right)$ is a probability vector for all $t \in \mathbb{R}_{\ge 0}$ and all $j \in \mathcal{V}$. For this purpose, let us define the sets: $ \mathcal{S}_{\mathrm{n},j} := \{ ( s_{\mathrm{n},j} \ i_{\mathrm{n},j} \ q_{\mathrm{n},j} \ s_{\mathrm{v},j} \ i_{\mathrm{v},j} \ q_{\mathrm{v},j} ) : s_{\mathrm{n},j}, i_{\mathrm{n},j}, q_{\mathrm{n},j} \ge 0, s_{\mathrm{v},j} = i_{\mathrm{v},j} = q_{\mathrm{v},j} =0, s_{\mathrm{n},j}+ i_{\mathrm{n},j}+q_{\mathrm{n},j}=1 \}$ for any $j \in \mathcal{V}_{\mathrm{n}}$; and $ \mathcal{S}_{\mathrm{v},j} := \{ ( s_{\mathrm{n},j} \ i_{\mathrm{n},j} \ q_{\mathrm{n},j} \ s_{\mathrm{v},j} \ i_{\mathrm{v},j} \ q_{\mathrm{v},j} ) : s_{\mathrm{n},j} = i_{\mathrm{n},j} = q_{\mathrm{n},j}=0, s_{\mathrm{v},j}, i_{\mathrm{v},j}, q_{\mathrm{v},j} \ge 0, s_{\mathrm{v},j}+i_{\mathrm{v},j}+ q_{\mathrm{v},j} = 1 \}$ for any $j \in \mathcal{V}_{\mathrm{v}}$. 

\begin{lemma}\label{lemma1} For all $j\in \mathcal{V}_{\alpha}$ with $\alpha\in\{ \mathrm{n},\mathrm{v}\}$, the set
$\mathcal{S}_{\alpha,j}$ is positive invariant under \eqref{diffeq}. 
\end{lemma}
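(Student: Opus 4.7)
Fix $j\in\mathcal V_\alpha$ with $\alpha\in\{\mathrm n,\mathrm v\}$, and let the initial condition lie in $\mathcal S_{\alpha,j}$. Since all right-hand sides of \eqref{diffeq} are polynomials in the state variables, the vector field is locally Lipschitz on a neighbourhood of the bounded set $\prod_{k\in\mathcal V}\mathcal S_{\alpha(k),k}$, so a unique local solution exists. Positive invariance will follow from three claims: (i) the ``inactive'' triple of components stays identically zero; (ii) the sum of the ``active'' components is conserved; and (iii) each active component remains non-negative. Together, (i)--(iii) keep the state in $\mathcal S_{\alpha,j}$ and also make the solution bounded, which extends existence to all $t\in\mathbb R_{\geq 0}$.

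For (i), I would observe that if $\alpha=\mathrm n$, the three equations in \eqref{diffeq} for $s_{\mathrm v,j}$, $i_{\mathrm v,j}$, $q_{\mathrm v,j}$ are linear in the triple $(s_{\mathrm v,j},i_{\mathrm v,j},q_{\mathrm v,j})$ (with coefficients depending on the rest of the population), so $(s_{\mathrm v,j},i_{\mathrm v,j},q_{\mathrm v,j})\equiv(0,0,0)$ is a solution of that sub-system; by uniqueness it is the solution. The symmetric argument works when $\alpha=\mathrm v$. For (ii), I would sum the three active ODEs in \eqref{diffeq}; the terms containing $\bar\alpha_{\alpha,j}s_{\alpha,j}$ cancel because the three contagion coefficients $-1$, $1-p_{\mathrm q}$, $p_{\mathrm q}$ (respectively $-1$, $1-p_{\mathrm q}(1-\gamma_{\mathrm q})$, $p_{\mathrm q}(1-\gamma_{\mathrm q})$ in the vaccinated case) add to zero, while the terms $\pm\tau i_{\alpha,j}$, $\pm\beta i_{\alpha,j}$, $\pm\beta q_{\alpha,j}$ cancel pairwise; hence the sum is constant, equal to its initial value $1$.

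For (iii), I would use the standard sub-tangentiality (Nagumo-type) argument on the product set $\Omega:=\prod_{k\in\mathcal V}\mathcal S_{\alpha(k),k}$. On $\Omega$ all the $i_{\mathrm n,k}$ and $i_{\mathrm v,k}$ are non-negative, so $\bar\alpha_{\mathrm n,j}\ge 0$ and $\bar\alpha_{\mathrm v,j}\ge 0$. It then suffices to check that at any boundary point of $\Omega$, the vector field points into $\Omega$. Concretely, whenever $s_{\alpha,j}=0$ we have $\dot s_{\alpha,j}=\beta(i_{\alpha,j}+q_{\alpha,j})\ge 0$; whenever $i_{\alpha,j}=0$ we have $\dot i_{\alpha,j}$ equal to a non-negative multiple of $\bar\alpha_{\alpha,j}s_{\alpha,j}\ge 0$; and whenever $q_{\alpha,j}=0$ we have $\dot q_{\alpha,j}=(\text{non-negative})\,\bar\alpha_{\alpha,j}s_{\alpha,j}+\tau i_{\alpha,j}\ge 0$. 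Combined with (i) and (ii), this shows the vector field is subtangent to $\Omega$ at every boundary point, so $\Omega$ is positively invariant; projecting onto the $j$-th factor yields the claim for $\mathcal S_{\alpha,j}$.

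The only non-routine ingredient is the joint invariance step in (iii): because $\bar\alpha_{\alpha,j}$ couples the dynamics across individuals, one cannot check invariance of a single $\mathcal S_{\alpha,j}$ in isolation but must apply the tangent-cone condition on the whole product $\Omega$. Everything else reduces to algebraic cancellations in \eqref{diffeq} and a uniqueness argument for a linear homogeneous sub-system.
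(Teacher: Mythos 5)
Your proof is correct and follows essentially the same route as the paper's: the paper likewise argues that each component's derivative is non-negative when that component vanishes, that the inactive triple has zero derivatives, and that the active components sum to a conserved quantity. The one place you go beyond the paper is in making explicit that, because $\bar\alpha_{\alpha,j}$ depends on the $i_{\alpha,k}$ of \emph{other} individuals, the sign argument must be run on the whole product set $\Omega$ via a Nagumo-type condition rather than on a single $\mathcal S_{\alpha,j}$ in isolation — a point the paper's terser proof leaves implicit.
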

\begin{proof}
Let us consider $\mathcal{S}_{\mathrm{n},j}$ for any $j \in \mathcal{V}_{\mathrm{n}}$. First, note that if one of the probabilities governed by \eqref{diffeq} equals zero, then its respective time-derivative is non-negative. Next, observe that $s_{\mathrm{v},j} = i_{\mathrm{v},j} = q_{\mathrm{v},j} =0$ implies that the time-derivatives of $s_{\mathrm{v},j}$, $ i_{\mathrm{v},j}$, and $q_{\mathrm{v},j} =0$ are zero. Now note that for all $j \in \mathcal{V}_{\mathrm{n}}$, $ \dot{s}_{\mathrm{n},j} + \dot{i}_{\mathrm{n},j} + \dot{q}_{\mathrm{n},j} =0 $, so $s_{\mathrm{n},j}+ i_{\mathrm{n},j}+q_{\mathrm{n},j} = 1 $ for all $t \in \mathbb{R}_{\ge 0}$. For any $j \in \mathcal{V}_{\mathrm{v}}$, the proof works analogously for $\mathcal{S}_{\mathrm{v},j}$ . 
\end{proof}

To commence the mean-field analysis of the system, let us denote the average probability for a randomly selected individual to have vaccination status $\alpha$ and be susceptible, infectious, or quarantined as
\begin{equation}\label{propapprox}
 y_{\alpha,\mathrm{s}} := \dfrac{1}{n} \sum_{j \in \mathcal{V}} s_{\alpha,j}, \quad y_{\alpha,\mathrm{i}}:= \dfrac{1}{n} \sum_{j \in \mathcal{V}} i_{\alpha,j}, \quad \text{and} \quad y_{\alpha,\mathrm{q}} := \dfrac{1}{n} \sum_{j \in \mathcal{V}} q_{\alpha,j},
\end{equation}
respectively, where $\alpha\in\{\mathrm{n},\mathrm{v}\}$.

By taking a sufficiently large population size $n$, the fraction of individuals in a state can be arbitrarily closely approximated by the average probability to be in that state, for any finite time-horizon~\cite{limittheo,zino2}, i.e.,
\begin{equation}\label{eq:macro_variables}
 \begin{alignedat}{2}
  S_{\alpha}(t) &:= \tfrac{1}{n}| \{j \in \mathcal{V}_{\alpha} : X_j(t) = \mathrm{S} \} | &&\approx y_{\alpha,\mathrm{s}}, \\
  I_{\alpha}(t) &:= \tfrac{1}{n}| \{j \in \mathcal{V}_{\alpha} : X_j(t) = \mathrm{I} \} | &&\approx y_{\alpha,\mathrm{i}}, \\
  Q_{\alpha}(t) &:= \tfrac{1}{n}| \{j \in \mathcal{V}_{\alpha} : X_j(t) = \mathrm{Q} \} |&& \approx y_{\alpha,\mathrm{q}}, \\
\end{alignedat} 
\end{equation}
with $\alpha\in\{\mathrm{n},\mathrm{v}\}$, as illustrated in Fig.~\ref{fig:approximation}.

\begin{figure}
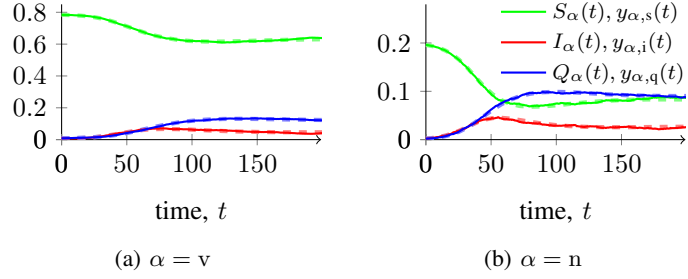

\vspace{0.25cm}
\centering
\subfloat[$\alpha=\mathrm{v}$]{\input{Figures/2a}} \hspace{0.5cm}
\subfloat[$\alpha=\mathrm{n}$]{\input{Figures/2b}}
 \caption{Comparison of the quantities in \eqref{eq:macro_variables} between a simulation of the Markov process (solid) and its deterministic approximation from Proposition~\ref{prop:dynamics} (dashed). Parameters are $n=20\,000$, $v=0.8$, $\lambda=0.2$, $\sigma_{\mathrm{v}}=0.7$, $\sigma_{\mathrm{n}}=0.2$, $p_{\mathrm{q}}=0.2$, $\beta=0.02$, $\gamma_{\mathrm{t}}=0.5$, $\gamma_{\mathrm{q}}=0.9$, $\tau=0.05$, and $\theta=0.5$. } \label{fig:approximation} 
\end{figure}



Since the average probabilities in \eqref{propapprox} adequately reflect the state of a sufficiently large population, we now focus on the dynamics of the macroscopic variables in \eqref{propapprox}, presented in the following proposition. 
\begin{proposition}\label{prop:dynamics}
Consider the system in \eqref{diffeq}. In the thermodynamic limit of large-scale systems $n\to \infty$, the dynamics of \eqref{propapprox} are given by
\begin{align}\label{y}
 \dot{y}_{\mathrm{n},\mathrm{s}} = & - 2  \lambda \Big( \tfrac{\theta}{1-v} + 1 -\theta\Big)(1-\sigma_{\mathrm{n}})y_{\mathrm{n},\mathrm{s}} y_{\mathrm{n},\mathrm{i}}  - 2  \lambda (1-\theta)(1-\sigma_{\mathrm{v}}) y_{\mathrm{n},\mathrm{s}} y_{\mathrm{v},\mathrm{i}} \notag+ \beta y_{\mathrm{n},\mathrm{i}} + \beta y_{\mathrm{n},\mathrm{q}}, \notag\\
 \dot{y}_{\mathrm{n},\mathrm{i}} \hspace{1pt} = & 2  \lambda [1-p_{\mathrm{q}}]  \Big( \tfrac{\theta}{1-v} + 1 -\theta\Big)(1-\sigma_{\mathrm{n}})y_{\mathrm{n},\mathrm{s}} y_{\mathrm{n},\mathrm{i}} + 2 \lambda [1-p_{\mathrm{q}}] (1-\theta)(1-\sigma_{\mathrm{v}}) y_{\mathrm{n},\mathrm{s}} y_{\mathrm{v},\mathrm{i}} \notag - (\beta + \tau) y_{\mathrm{n},\mathrm{i}}, \notag\\
 \dot{y}_{\mathrm{n},\mathrm{q}} = & 2  \lambda p_{\mathrm{q}}  \Big( \tfrac{\theta}{1-v} + 1 -\theta\Big)(1-\sigma_{\mathrm{n}})y_{\mathrm{n},\mathrm{s}} y_{\mathrm{n},\mathrm{i}}  +  2\lambda p_{\mathrm{q}} (1-\theta)(1-\sigma_{\mathrm{v}}) y_{\mathrm{n},\mathrm{s}} y_{\mathrm{v},\mathrm{i}}  + \tau y_{\mathrm{n},\mathrm{i}} - \beta y_{\mathrm{n},\mathrm{q}}, \notag\\
 \dot{y}_{\mathrm{v},\mathrm{s}} =& - 2  \lambda (1-\gamma_{\mathrm{t}}) (1-\theta)(1-\sigma_{\mathrm{n}}) y_{\mathrm{v},\mathrm{s}} y_{\mathrm{n},\mathrm{i}} + \beta y_{\mathrm{v},\mathrm{i}} - 2  \lambda (1-\gamma_{\mathrm{t}}) \Big( \tfrac{\theta}{v} + 1 -\theta\Big)(1-\sigma_{\mathrm{v}})y_{\mathrm{v},\mathrm{s}} y_{\mathrm{v},\mathrm{i}} + \beta y_{\mathrm{v},\mathrm{q}}, \\
 \dot{y}_{\mathrm{v},\mathrm{i}} \hspace{1pt} = & 2 \lambda (1-\gamma_{\mathrm{t}}) [1-p_{\mathrm{q}}(1-\gamma_{\mathrm{q}})] (1-\theta)(1-\sigma_{\mathrm{n}}) y_{\mathrm{v},\mathrm{s}} y_{\mathrm{n},\mathrm{i}}  + 2  \lambda (1-\gamma_{\mathrm{t}}) [1-p_{\mathrm{q}}(1-\gamma_{\mathrm{q}})]  \Big( \tfrac{\theta}{v} + 1 -\theta\Big)(1-\sigma_{\mathrm{v}})y_{\mathrm{v},\mathrm{s}} y_{\mathrm{v},\mathrm{i}} \notag \\
 & - (\beta + \tau) y_{\mathrm{v},\mathrm{i}}, \notag \\
 \dot{y}_{\mathrm{v},\mathrm{q}} \hspace{1pt}= & 2 \lambda (1-\gamma_{\mathrm{t}}) p_{\mathrm{q}}(1-\gamma_{\mathrm{q}})(1-\theta)(1-\sigma_{\mathrm{n}}) y_{\mathrm{v},\mathrm{s}} y_{\mathrm{n},\mathrm{i}} + 2  \lambda (1-\gamma_{\mathrm{t}}) p_{\mathrm{q}}(1-\gamma_{\mathrm{q}}) \Big( \tfrac{\theta}{v} + 1 -\theta\Big)(1-\sigma_{\mathrm{v}})y_{\mathrm{v},\mathrm{s}} y_{\mathrm{v},\mathrm{i}}  + \tau y_{\mathrm{v},\mathrm{i}} - \beta y_{\mathrm{v},\mathrm{q}}. \notag
\end{align}
\end{proposition}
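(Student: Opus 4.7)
The plan is to derive \eqref{y} from the individual-level mean-field system \eqref{diffeq} by summing the six equations over the two subpopulations, dividing by $n$, and then passing to the thermodynamic limit $n\to\infty$. Throughout, I will exploit the fact that $s_{\mathrm v,j}=i_{\mathrm v,j}=q_{\mathrm v,j}=0$ for $j\in\mathcal V_{\mathrm n}$ and vice versa, so that the sum $\sum_{j\in\mathcal V} s_{\mathrm n,j}$ reduces to $\sum_{j\in\mathcal V_{\mathrm n}} s_{\mathrm n,j}=n\,y_{\mathrm n,\mathrm s}$, and analogously for the other variables.

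First I would handle the linear contributions. For the non-vaccinated susceptible equation, $\tfrac{1}{n}\sum_{j\in\mathcal V_{\mathrm n}}(\beta i_{\mathrm n,j}+\beta q_{\mathrm n,j})$ equals $\beta y_{\mathrm n,\mathrm i}+\beta y_{\mathrm n,\mathrm q}$ directly from \eqref{propapprox}, and the recovery/testing terms in the other five equations of \eqref{diffeq} translate in the same way. This gives the entire linear part of \eqref{y} with no further work.

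Next, for the bilinear contagion terms, I would substitute the explicit formulas \eqref{barinteractionunv}--\eqref{barinteractionv} and use $\sum_{k\in\mathcal V\setminus\{j\}} i_{\mathrm n,k}=n\,y_{\mathrm n,\mathrm i}-i_{\mathrm n,j}$ and $\sum_{k\in\mathcal V\setminus\{j\}} i_{\mathrm v,k}=n\,y_{\mathrm v,\mathrm i}-i_{\mathrm v,j}$. Expanding $\tfrac{n}{n(1-v)-1}=\tfrac{1}{1-v}+O(1/n)$ and $\tfrac{n}{n-1}=1+O(1/n)$, and noting that $i_{\alpha,j}\in[0,1]$ so the subtracted self-terms contribute only $O(1/n)$, I obtain the uniform-in-$j$ expansion
\begin{equation*}
\bar\alpha_{\mathrm n,j}=\bar\alpha_{\mathrm n}+O(1/n),\qquad \bar\alpha_{\mathrm n}:=2(1-\sigma_{\mathrm n})\!\left(\tfrac{\theta}{1-v}+1-\theta\right)\! y_{\mathrm n,\mathrm i}+2(1-\sigma_{\mathrm v})(1-\theta)\,y_{\mathrm v,\mathrm i},
\end{equation*}
and an analogous expression $\bar\alpha_{\mathrm v}$ from \eqref{barinteractionv}, where $\tfrac{\theta}{1-v}$ is replaced by $\tfrac{\theta}{v}$. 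Consequently $\tfrac{1}{n}\sum_{j\in\mathcal V_{\mathrm n}}\bar\alpha_{\mathrm n,j}\,s_{\mathrm n,j}=\bar\alpha_{\mathrm n}\,y_{\mathrm n,\mathrm s}+O(1/n)$, and combining with the linear part and the multiplicative factors $\{-\lambda,\lambda(1-p_{\mathrm q}),\lambda p_{\mathrm q}\}$ yields the first three equations of \eqref{y} in the limit. Repeating the same reduction on the three vaccinated equations of \eqref{diffeq}, with the obvious replacement of $\bar\alpha_{\mathrm n}$ by $\bar\alpha_{\mathrm v}$ and of $\{1-p_{\mathrm q},p_{\mathrm q}\}$ by $\{(1-\gamma_{\mathrm t})[1-p_{\mathrm q}(1-\gamma_{\mathrm q})],(1-\gamma_{\mathrm t})p_{\mathrm q}(1-\gamma_{\mathrm q})\}$ (with the extra $(1-\gamma_{\mathrm t})$ from the susceptibility reduction), gives the last three equations of \eqref{y}.

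The main obstacle I anticipate is the factorization step $\tfrac{1}{n}\sum_j \bar\alpha_{\mathrm n,j}\,s_{\mathrm n,j}\to \bar\alpha_{\mathrm n}\,y_{\mathrm n,\mathrm s}$: in principle the sum couples the individual susceptibility probability $s_{\mathrm n,j}$ to the aggregate infectious quantities appearing in $\bar\alpha_{\mathrm n,j}$. However, because the residual $j$-dependence of $\bar\alpha_{\mathrm n,j}$ consists only of the $-i_{\mathrm n,j}/(n(1-v)-1)$ and $-i_{\mathrm v,j}/(n-1)$ self-terms, which are $O(1/n)$ uniformly in $j$, and because $|s_{\mathrm n,j}|\le 1$, this coupling disappears in the thermodynamic limit. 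This is the standard mean-field decoupling already invoked in passing from the Markov process to \eqref{diffeq} via $\mathbb E[Q_{\alpha,j}]$, so no additional probabilistic hypothesis is required; it suffices to collect the $O(1/n)$ error terms explicitly and send $n\to\infty$.
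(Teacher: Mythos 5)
Your proposal is correct and follows essentially the same route as the paper's (one-line) proof: differentiate the averages in \eqref{propapprox}, substitute the individual-level equations \eqref{diffeq} with \eqref{barinteractionunv}--\eqref{barinteractionv}, and pass to the limit $n\to\infty$. The only addition is your explicit bookkeeping of the $O(1/n)$ self-terms and the expansion $\tfrac{n}{n(1-v)-1}\to\tfrac{1}{1-v}$, which the paper leaves implicit but which is exactly the justification needed for the factorization $\tfrac{1}{n}\sum_j\bar\alpha_{\mathrm n,j}s_{\mathrm n,j}\to\bar\alpha_{\mathrm n}y_{\mathrm n,\mathrm s}$.
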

\begin{proof}
The equations are obtained by computing the temporal derivatives of the expressions in \eqref{propapprox}, and by using the expressions derived in Proposition~\ref{prop:mean}.
\end{proof}

\begin{remark}\label{rem:4}
Only $4$ of the $6$ equations in \eqref{y} are linearly independent, since ${y}_{\mathrm{v},\mathrm{s}}+{y}_{\mathrm{v},\mathrm{i}}+{y}_{\mathrm{v},\mathrm{q}}=v$ and ${y}_{\mathrm{n},\mathrm{s}}+{y}_{\mathrm{n},\mathrm{i}}+{y}_{\mathrm{n},\mathrm{q}}=1-v$. 
\end{remark}

\subsection{Epidemic threshold} \label{subsection:mainresults}

Here, we study whether a local outbreak of the infection leads to endemicity in the population. 
Theorem \ref{theo1} presents the \textit{epidemic threshold}, that is, the conditions such that the disease-free equilibrium (DFE) of \eqref{y} (i.e., $y_{ \mathrm{n},\mathrm{i}}=y_{ \mathrm{n},\mathrm{q}}=y_{ \mathrm{v},\mathrm{i}}=y_{ \mathrm{v},\mathrm{q}}=0$) is (locally) asymptotically stable. The threshold is formulated as a critical value for the control input, that is, the testing rate $\tau$; if the rate of testing $\tau$ is higher than the threshold $\bar{\tau}$, then the local outbreak will be eradicated. If not, it will lead to endemicity. The proof is reported in Appendix~\ref{prooftheo1}. 

\begin{theorem} \label{theo1} Consider the system in \eqref{y}. In the thermodynamic limit of large-scale systems $n\to \infty$, the epidemic threshold is equal to
\begin{equation}\label{eq:threshold}
 \bar{\tau} := \lambda \xi - \beta+ \lambda \sqrt{ \xi^2 - 4 \theta (1-\gamma_{\mathrm{t}})[1-p_{\mathrm{q}}][1-p_{\mathrm{q}}(1-\gamma_{\mathrm{q}})] (1-\sigma_{\mathrm{n}})(1-\sigma_{\mathrm{v}})}, 
\end{equation}
where
\begin{equation}\label{xi}
 \xi :=  (1-p_{\mathrm{q}})(1-\sigma_{\mathrm{n}})[\theta + (1-\theta)(1-v)] + (1-\gamma_{\mathrm{t}})[1-p_{\mathrm{q}}(1-\gamma_{\mathrm{q}})](1-\sigma_{\mathrm{v}})[ \theta +(1-\theta)v].
\end{equation}
If $ \tau > \bar{\tau}$, the DFE is locally asymptotically stable.
\end{theorem}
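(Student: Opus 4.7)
The plan is a classical local stability analysis via linearization of \eqref{y} around the DFE, at which $y_{\mathrm{n},\mathrm{s}}^{*}=1-v$, $y_{\mathrm{v},\mathrm{s}}^{*}=v$, and all other components vanish. Invoking Remark~\ref{rem:4}, I will work with the four independent states $(y_{\mathrm{n},\mathrm{i}},y_{\mathrm{n},\mathrm{q}},y_{\mathrm{v},\mathrm{i}},y_{\mathrm{v},\mathrm{q}})$, eliminating $y_{\alpha,\mathrm{s}}$ through the conservation identities and linearizing the remaining four equations about the DFE.

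By inspection, the resulting Jacobian has a block lower-triangular structure: the linearized dynamics of the two quarantine variables depend only on the infectious variables (through the testing terms $\tau y_{\alpha,\mathrm{i}}$ and the quadratic contagion terms, which upon linearization contribute terms proportional to $y_{\alpha,\mathrm{i}}$) and on themselves via the diagonal $-\beta$. Hence the spectrum of the full Jacobian is the union of $\{-\beta,-\beta\}$ and the spectrum of a $2\times 2$ block $J$ in the variables $(y_{\mathrm{n},\mathrm{i}},y_{\mathrm{v},\mathrm{i}})$. Since $\beta>0$, local asymptotic stability of the DFE is equivalent to Hurwitz-stability of $J$. A direct substitution of $y_{\mathrm{n},\mathrm{s}}^{*}=1-v$ and $y_{\mathrm{v},\mathrm{s}}^{*}=v$ in the bilinear contagion terms of \eqref{y} yields $J=2\lambda M-(\beta+\tau)I$, where $M$ has entries $A=(1-p_{\mathrm{q}})(1-\sigma_{\mathrm{n}})[\theta+(1-\theta)(1-v)]$, $B=(1-p_{\mathrm{q}})(1-\theta)(1-\sigma_{\mathrm{v}})(1-v)$, $C=(1-\gamma_{\mathrm{t}})[1-p_{\mathrm{q}}(1-\gamma_{\mathrm{q}})](1-\theta)(1-\sigma_{\mathrm{n}})v$, and $D=(1-\gamma_{\mathrm{t}})[1-p_{\mathrm{q}}(1-\gamma_{\mathrm{q}})](1-\sigma_{\mathrm{v}})[\theta+(1-\theta)v]$. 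Comparison with \eqref{xi} shows $A+D=\xi$.

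The Hurwitz criterion for $J$ requires negative trace and positive determinant. The trace condition is $\beta+\tau>\lambda\xi$, and the determinant condition is the quadratic inequality $(\beta+\tau)^{2}-2\lambda\xi(\beta+\tau)+4\lambda^{2}(AD-BC)>0$ in the unknown $\beta+\tau$. Solving for the larger root and rearranging will produce $\tau>\bar{\tau}$ exactly when the identity $AD-BC=\theta(1-\gamma_{\mathrm{t}})(1-p_{\mathrm{q}})[1-p_{\mathrm{q}}(1-\gamma_{\mathrm{q}})](1-\sigma_{\mathrm{n}})(1-\sigma_{\mathrm{v}})$ holds. I would establish this identity by factoring out the common prefactor $(1-p_{\mathrm{q}})(1-\gamma_{\mathrm{t}})[1-p_{\mathrm{q}}(1-\gamma_{\mathrm{q}})](1-\sigma_{\mathrm{n}})(1-\sigma_{\mathrm{v}})$ from both $AD$ and $BC$ and observing the purely geometric cancellation $[\theta+(1-\theta)(1-v)][\theta+(1-\theta)v]-(1-\theta)^{2}v(1-v)=\theta^{2}+\theta(1-\theta)=\theta$. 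I would then check that $\xi^{2}\geq 4(AD-BC)$, so the square root in \eqref{eq:threshold} is real-valued; this follows from the AM--GM inequality $\xi=A+D\geq 2\sqrt{AD}$ together with $BC\geq 0$. Finally, since the larger root satisfies $\lambda\xi+\lambda\sqrt{\xi^{2}-4(AD-BC)}\geq\lambda\xi$, the determinant condition automatically implies the trace condition, so $\tau>\bar{\tau}$ is necessary and sufficient for Hurwitz-stability of $J$, and hence for local asymptotic stability of the DFE.

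The main obstacle is purely bookkeeping: carefully reading off the linearized coefficients from \eqref{y}, in particular verifying that the homophily-modified weights $\tfrac{\theta}{1-v}+(1-\theta)$ combine with the factor $(1-v)=y_{\mathrm{n},\mathrm{s}}^{*}$ to give $[\theta+(1-\theta)(1-v)]$ (and symmetrically for the vaccinated block), and then executing the cancellation in $AD-BC$ without sign mistakes. Once this identity is in hand, the rest is a routine trace--determinant argument.
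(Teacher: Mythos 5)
Your proposal is correct and follows essentially the same route as the paper's proof in Appendix~A: linearization of \eqref{y} at the DFE, reduction to the four independent coordinates, the block-triangular Jacobian whose quarantine block contributes the eigenvalue $-\beta$ twice, and stability governed by the $2\times 2$ infectious block, whose entries are exactly your $A,B,C,D$ (so $A+D=\xi$ and $4(AD-BC)$ is the term under the square root in \eqref{eq:threshold}). The one substantive difference is how realness of the eigenvalues is established: the paper expands the discriminant as a quadratic polynomial in $\theta$ and argues positivity from the fact that its roots are complex and its leading coefficient positive, whereas you note directly that the discriminant equals $(A-D)^2+4BC\ge 0$ because $B,C\ge 0$ --- a cleaner argument, which moreover makes explicit the cancellation $[\theta+(1-\theta)(1-v)][\theta+(1-\theta)v]-(1-\theta)^2v(1-v)=\theta$ that the paper leaves as a ``direct computation.'' One phrasing to tighten: the determinant condition alone does not imply the trace condition, since it is also satisfied when $\beta+\tau$ lies below the smaller root of your quadratic; the correct statement is that the two roots straddle $\lambda\xi$, so the conjunction of the trace and determinant conditions is equivalent to $\beta+\tau$ exceeding the larger root, which is precisely $\tau>\bar\tau$.
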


\begin{remark}
From \eqref{eq:threshold}, observe that if the recovery rate satisfies
\begin{equation}\label{remark5beta}
  \beta>\lambda \xi+ \lambda \sqrt{ \xi^2 - 4 \theta (1-\gamma_{\mathrm{t}})[1-p_{\mathrm{q}}][1-p_{\mathrm{q}}(1-\gamma_{\mathrm{q}})] (1-\sigma_{\mathrm{n}})(1-\sigma_{\mathrm{v}}) }, 
\end{equation}
then no control is needed, since the DFE is always locally asymptotically stable.
\end{remark}

\begin{corollary}
In the absence of homophily ($\theta=0$), that is, when individuals' interactions are not influenced by the multi-population structure, the epidemic threshold in \eqref{eq:threshold} reduces to 
\begin{equation}\label{threshold2step}
  \tau ^{\ast} := 2\lambda (1-p_{\mathrm{q}})(1-v)(1-\sigma_{\mathrm{n}}) + 2\lambda(1-\gamma_{\mathrm{t}})v[1-p_{\mathrm{q}}(1-\gamma_{\mathrm{q}})](1-\sigma_{\mathrm{v}}) - \beta. 
\end{equation}
\end{corollary}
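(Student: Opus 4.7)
The plan is to derive the result by direct substitution of $\theta=0$ into the epidemic threshold formula \eqref{eq:threshold} established by Theorem~\ref{theo1}. Since Theorem~\ref{theo1} gives a closed-form expression that is valid for any $\theta \in [0,1)$, the corollary reduces to a calculation rather than a new stability analysis.

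First I would inspect the auxiliary quantity $\xi$ defined in \eqref{xi}. Setting $\theta=0$ eliminates the $\theta$-terms inside the square brackets, collapsing the prefactors $[\theta+(1-\theta)(1-v)]$ and $[\theta+(1-\theta)v]$ to $(1-v)$ and $v$ respectively. Hence
\begin{equation*}
\xi\big|_{\theta=0} = (1-p_{\mathrm{q}})(1-\sigma_{\mathrm{n}})(1-v) + (1-\gamma_{\mathrm{t}})[1-p_{\mathrm{q}}(1-\gamma_{\mathrm{q}})](1-\sigma_{\mathrm{v}})v.
\end{equation*}

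Next I would turn to the square-root term in \eqref{eq:threshold}. At $\theta=0$ the subtracted quantity $4\theta(1-\gamma_{\mathrm{t}})[1-p_{\mathrm{q}}][1-p_{\mathrm{q}}(1-\gamma_{\mathrm{q}})](1-\sigma_{\mathrm{n}})(1-\sigma_{\mathrm{v}})$ vanishes identically, so the radicand reduces to $\xi^2$. Since $\xi\ge 0$ (all factors are non-negative, as can be read off directly from its definition), we have $\sqrt{\xi^2}=\xi$, and substituting into \eqref{eq:threshold} yields $\bar{\tau}=2\lambda\xi-\beta$.

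Combining the two observations above gives exactly the expression \eqref{threshold2step} claimed for $\tau^{\ast}$, completing the proof. There is no real obstacle here; the only point worth being careful about is verifying the sign of $\xi$ before taking the square root, which is immediate from the fact that each factor in \eqref{xi} lies in $[0,1]$.
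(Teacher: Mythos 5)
Your proof is correct and follows the only natural route: the paper states this corollary without proof, and direct substitution of $\theta=0$ into \eqref{eq:threshold}, noting that the radicand collapses to $\xi^2$ with $\xi\ge 0$ so the square root contributes another $+\lambda\xi$, is exactly the intended calculation. The resulting $2\lambda\xi|_{\theta=0}-\beta$ matches \eqref{threshold2step} term by term.
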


\begin{figure}
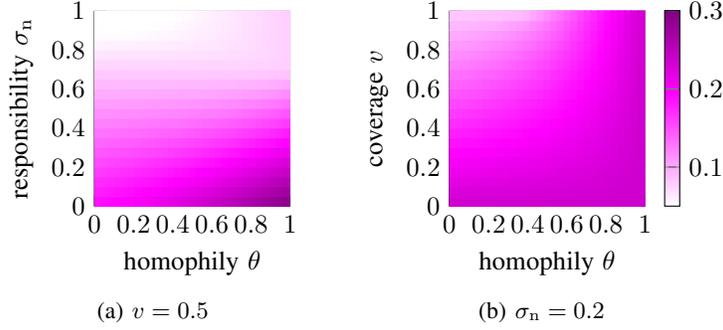

\vspace{0.25cm}
\centering
 \subfloat[$v=0.5$]{\input{Figures/3a}\label{fig:3a}} \hspace{0.5cm}
 \subfloat[$\sigma_{\mathrm{n}}=0.2$]{ \input{Figures/3b}\label{fig:3b}}
 \caption{The epidemic threshold (color-coded), computed using \eqref{eq:threshold} for different values of the model parameters. Common parameters are $\lambda=0.2$, $\sigma_{\mathrm{v}}=0.5$, $p_{\mathrm{q}}=0.2$, $\beta=0.02$, $\gamma_{\mathrm{t}}=0.5$, $\gamma_{\mathrm{q}}=0.9$, and $\tau=0.05$. }
 \label{fig:threshold}
\end{figure}

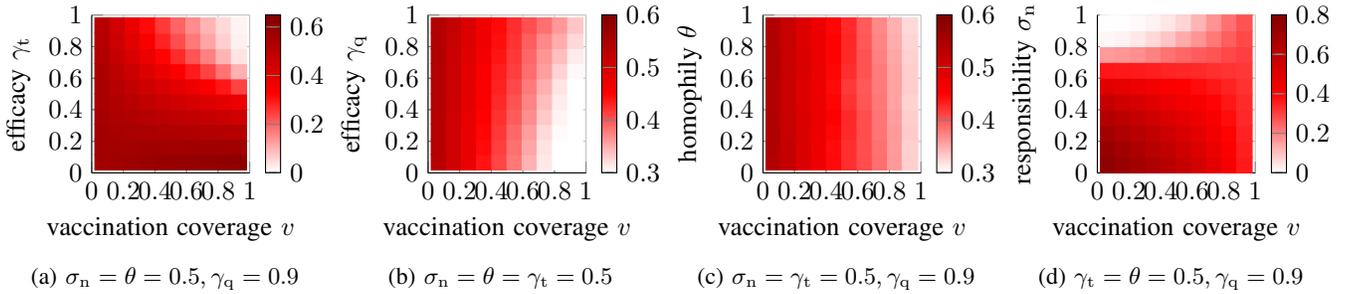
\begin{figure*}
\centering
\subfloat[$\sigma_{\mathrm{n}}=\theta=0.5,\gamma_{\mathrm{q}}=0.9$]{\begin{tikzpicture}
\begin{axis}[%
colormap={mymap}{[1pt] rgb(0pt)=(1,1,1); rgb(127pt)=(1,0,0); rgb(255pt)=(.5,0,0)},
width=\q cm,
height=\q cm,
scale only axis,
xmin=0,
xmax=1,
ymin=0,
point meta max=.65,
point meta min=0,
ymax=1,
axis background/.style={fill=white},
axis x line*=bottom,
axis y line*=left,
xlabel={vaccination coverage $v$},
ylabel={efficacy $\gamma_{\mathrm{t}}$},
xmajorgrids,colorbar,colorbar style={  at={(1.1,1)},width=.2cm, yticklabel style={
        /pgf/number format/fixed,
        /pgf/number format/precision=5
},},
ymajorgrids,
]

\addplot[%
surf,
shader=flat, draw=none, mesh/rows=11]
table[row sep=crcr, point meta=\thisrow{c}] {%
x	y	c\\
0.02	0.02	0.55076\\
0.02	0.116	0.55146\\
0.02	0.212	0.55466\\
0.02	0.308	0.55962\\
0.02	0.404	0.55282\\
0.02	0.5	0.55466\\
0.02	0.596	0.55053\\
0.02	0.692	0.55161\\
0.02	0.788	0.54651\\
0.02	0.884	0.54633\\
0.02	0.98	0.53646\\
0.116	0.02	0.5745\\
0.116	0.116	0.55915\\
0.116	0.212	0.55519\\
0.116	0.308	0.54298\\
0.116	0.404	0.52944\\
0.116	0.5	0.5275\\
0.116	0.596	0.52065\\
0.116	0.692	0.50613\\
0.116	0.788	0.48653\\
0.116	0.884	0.47781\\
0.116	0.98	0.46642\\
0.212	0.02	0.57327\\
0.212	0.116	0.56782\\
0.212	0.212	0.55187\\
0.212	0.308	0.53552\\
0.212	0.404	0.52677\\
0.212	0.5	0.50429\\
0.212	0.596	0.48362\\
0.212	0.692	0.46009\\
0.212	0.788	0.42934\\
0.212	0.884	0.41352\\
0.212	0.98	0.397\\
0.308	0.02	0.58276\\
0.308	0.116	0.56472\\
0.308	0.212	0.55323\\
0.308	0.308	0.53227\\
0.308	0.404	0.50203\\
0.308	0.5	0.47983\\
0.308	0.596	0.4452\\
0.308	0.692	0.41709\\
0.308	0.788	0.38618\\
0.308	0.884	0.35608\\
0.308	0.98	0.33195\\
0.404	0.02	0.59336\\
0.404	0.116	0.58046\\
0.404	0.212	0.55353\\
0.404	0.308	0.52482\\
0.404	0.404	0.49068\\
0.404	0.5	0.45556\\
0.404	0.596	0.41541\\
0.404	0.692	0.36653\\
0.404	0.788	0.32362\\
0.404	0.884	0.29603\\
0.404	0.98	0.26746\\
0.5	0.02	0.6\\
0.5	0.116	0.57088\\
0.5	0.212	0.54193\\
0.5	0.308	0.5176\\
0.5	0.404	0.47256\\
0.5	0.5	0.43589\\
0.5	0.596	0.38434\\
0.5	0.692	0.32057\\
0.5	0.788	0.27431\\
0.5	0.884	0.23056\\
0.5	0.98	0.21422\\
0.596	0.02	0.60569\\
0.596	0.116	0.57993\\
0.596	0.212	0.54197\\
0.596	0.308	0.50895\\
0.596	0.404	0.46113\\
0.596	0.5	0.41017\\
0.596	0.596	0.34968\\
0.596	0.692	0.27447\\
0.596	0.788	0.21286\\
0.596	0.884	0.18253\\
0.596	0.98	0.15714\\
0.692	0.02	0.6132\\
0.692	0.116	0.58334\\
0.692	0.212	0.54848\\
0.692	0.308	0.4966\\
0.692	0.404	0.45775\\
0.692	0.5	0.39008\\
0.692	0.596	0.30609\\
0.692	0.692	0.22609\\
0.692	0.788	0.15462\\
0.692	0.884	0.12845\\
0.692	0.98	0.1101\\
0.788	0.02	0.62046\\
0.788	0.116	0.5837\\
0.788	0.212	0.53841\\
0.788	0.308	0.48879\\
0.788	0.404	0.43805\\
0.788	0.5	0.36334\\
0.788	0.596	0.26838\\
0.788	0.692	0.16084\\
0.788	0.788	0.10007\\
0.788	0.884	0.07321\\
0.788	0.98	0.06402\\
0.884	0.02	0.62933\\
0.884	0.116	0.5894\\
0.884	0.212	0.54254\\
0.884	0.308	0.48805\\
0.884	0.404	0.4211\\
0.884	0.5	0.34274\\
0.884	0.596	0.20216\\
0.884	0.692	0.08052\\
0.884	0.788	0.04076\\
0.884	0.884	0.03204\\
0.884	0.98	0.02765\\
0.98	0.02	0.63897\\
0.98	0.116	0.59217\\
0.98	0.212	0.53529\\
0.98	0.308	0.47453\\
0.98	0.404	0.41783\\
0.98	0.5	0.31201\\
0.98	0.596	0.10184\\
0.98	0.692	0.01529\\
0.98	0.788	0.0034\\
0.98	0.884	0.00228\\
0.98	0.98	0.00191\\
};

\end{axis}

\end{tikzpicture}
\subfloat[$\sigma_{\mathrm{n}}=\theta=\gamma_{\mathrm{t}}=0.5$]{\begin{tikzpicture}
\begin{axis}[%
colormap={mymap}{[1pt] rgb(0pt)=(1,1,1); rgb(127pt)=(1,0,0); rgb(255pt)=(.5,0,0)},
width=\q cm,
height=\q cm,
scale only axis,
xmin=0,
xmax=1,
ymin=0,
point meta max=.6,
point meta min=.3,
ymax=1,
axis background/.style={fill=white},
axis x line*=bottom,
axis y line*=left,
ylabel={efficacy $\gamma_{\mathrm{q}}$},
xlabel={vaccination coverage $v$},
colorbar,
colorbar style={  at={(1.1,1)},width=.2cm, yticklabel style={
        /pgf/number format/fixed,
        /pgf/number format/precision=5
},},
xmajorgrids,
ymajorgrids,
]

\addplot[%
surf,
shader=flat, draw=none, mesh/rows=11]
table[row sep=crcr, point meta=\thisrow{c}] {%
x	y	c\\
0.02	0.02	0.54467\\
0.02	0.116	0.5529\\
0.02	0.212	0.55264\\
0.02	0.308	0.55016\\
0.02	0.404	0.55047\\
0.02	0.5	0.54949\\
0.02	0.596	0.55381\\
0.02	0.692	0.5528\\
0.02	0.788	0.55347\\
0.02	0.884	0.54865\\
0.02	0.98	0.55413\\
0.116	0.02	0.51794\\
0.116	0.116	0.51707\\
0.116	0.212	0.52195\\
0.116	0.308	0.52056\\
0.116	0.404	0.52525\\
0.116	0.5	0.52642\\
0.116	0.596	0.52296\\
0.116	0.692	0.52442\\
0.116	0.788	0.53034\\
0.116	0.884	0.52563\\
0.116	0.98	0.52833\\
0.212	0.02	0.48913\\
0.212	0.116	0.49422\\
0.212	0.212	0.49144\\
0.212	0.308	0.49204\\
0.212	0.404	0.49217\\
0.212	0.5	0.49614\\
0.212	0.596	0.4988\\
0.212	0.692	0.49759\\
0.212	0.788	0.4977\\
0.212	0.884	0.50757\\
0.212	0.98	0.51217\\
0.308	0.02	0.45053\\
0.308	0.116	0.45679\\
0.308	0.212	0.4614\\
0.308	0.308	0.45905\\
0.308	0.404	0.46413\\
0.308	0.5	0.46987\\
0.308	0.596	0.46704\\
0.308	0.692	0.47357\\
0.308	0.788	0.47825\\
0.308	0.884	0.47557\\
0.308	0.98	0.47747\\
0.404	0.02	0.42045\\
0.404	0.116	0.42645\\
0.404	0.212	0.43021\\
0.404	0.308	0.43428\\
0.404	0.404	0.43715\\
0.404	0.5	0.43949\\
0.404	0.596	0.44016\\
0.404	0.692	0.44206\\
0.404	0.788	0.45551\\
0.404	0.884	0.45157\\
0.404	0.98	0.46032\\
0.5	0.02	0.39326\\
0.5	0.116	0.39501\\
0.5	0.212	0.40182\\
0.5	0.308	0.40153\\
0.5	0.404	0.41268\\
0.5	0.5	0.41534\\
0.5	0.596	0.42711\\
0.5	0.692	0.42169\\
0.5	0.788	0.43036\\
0.5	0.884	0.42802\\
0.5	0.98	0.43612\\
0.596	0.02	0.36028\\
0.596	0.116	0.36868\\
0.596	0.212	0.37074\\
0.596	0.308	0.37917\\
0.596	0.404	0.39046\\
0.596	0.5	0.38759\\
0.596	0.596	0.3976\\
0.596	0.692	0.4066\\
0.596	0.788	0.40679\\
0.596	0.884	0.41402\\
0.596	0.98	0.41075\\
0.692	0.02	0.3228\\
0.692	0.116	0.33121\\
0.692	0.212	0.33431\\
0.692	0.308	0.3459\\
0.692	0.404	0.35135\\
0.692	0.5	0.35571\\
0.692	0.596	0.36827\\
0.692	0.692	0.3704\\
0.692	0.788	0.37169\\
0.692	0.884	0.3855\\
0.692	0.98	0.40242\\
0.788	0.02	0.28304\\
0.788	0.116	0.28979\\
0.788	0.212	0.29641\\
0.788	0.308	0.30965\\
0.788	0.404	0.31899\\
0.788	0.5	0.32709\\
0.788	0.596	0.33631\\
0.788	0.692	0.35102\\
0.788	0.788	0.35469\\
0.788	0.884	0.36702\\
0.788	0.98	0.37872\\
0.884	0.02	0.21817\\
0.884	0.116	0.24301\\
0.884	0.212	0.24742\\
0.884	0.308	0.25413\\
0.884	0.404	0.27989\\
0.884	0.5	0.29981\\
0.884	0.596	0.29774\\
0.884	0.692	0.31688\\
0.884	0.788	0.32799\\
0.884	0.884	0.34454\\
0.884	0.98	0.34974\\
0.98	0.02	0.13307\\
0.98	0.116	0.15235\\
0.98	0.212	0.18297\\
0.98	0.308	0.19351\\
0.98	0.404	0.2152\\
0.98	0.5	0.22431\\
0.98	0.596	0.25993\\
0.98	0.692	0.27503\\
0.98	0.788	0.30293\\
0.98	0.884	0.31464\\
0.98	0.98	0.32675\\
};

\end{axis}

\end{tikzpicture}
\subfloat[$\sigma_{\mathrm{n}}=\gamma_{\mathrm{t}}=0.5,\gamma_{\mathrm{q}}=0.9$]{\begin{tikzpicture}
\begin{axis}[%
colormap={mymap}{[1pt] rgb(0pt)=(1,1,1); rgb(127pt)=(1,0,0); rgb(255pt)=(.5,0,0)},
width=\q cm,
height=\q cm,
scale only axis,
xmin=0,
xmax=1,
ymin=0,
point meta max=.6,
point meta min=0.3,
ymax=1,
axis background/.style={fill=white},
axis x line*=bottom,
axis y line*=left,
xlabel={vaccination coverage $v$},
ylabel={homophily $\theta$},
xmajorgrids,colorbar,colorbar style={  at={(1.1,1)},width=.2cm, yticklabel style={
        /pgf/number format/fixed,
        /pgf/number format/precision=5
},},
ymajorgrids,
]

\addplot[%
surf,
shader=flat, draw=none, mesh/rows=11]
table[row sep=crcr, point meta=\thisrow{c}] {%
x	y	c\\
0.02	0.02	0.54583\\
0.02	0.116	0.55181\\
0.02	0.212	0.55325\\
0.02	0.308	0.55602\\
0.02	0.404	0.54954\\
0.02	0.5	0.54652\\
0.02	0.596	0.55077\\
0.02	0.692	0.55268\\
0.02	0.788	0.54749\\
0.02	0.884	0.54871\\
0.02	0.98	0.55288\\
0.116	0.02	0.52318\\
0.116	0.116	0.53195\\
0.116	0.212	0.52669\\
0.116	0.308	0.52862\\
0.116	0.404	0.52561\\
0.116	0.5	0.52883\\
0.116	0.596	0.52644\\
0.116	0.692	0.52477\\
0.116	0.788	0.53041\\
0.116	0.884	0.53286\\
0.116	0.98	0.52686\\
0.212	0.02	0.50271\\
0.212	0.116	0.51092\\
0.212	0.212	0.50111\\
0.212	0.308	0.50605\\
0.212	0.404	0.50157\\
0.212	0.5	0.49952\\
0.212	0.596	0.50056\\
0.212	0.692	0.50706\\
0.212	0.788	0.49728\\
0.212	0.884	0.50137\\
0.212	0.98	0.50206\\
0.308	0.02	0.48093\\
0.308	0.116	0.47925\\
0.308	0.212	0.48099\\
0.308	0.308	0.48053\\
0.308	0.404	0.48224\\
0.308	0.5	0.47589\\
0.308	0.596	0.48279\\
0.308	0.692	0.48377\\
0.308	0.788	0.47856\\
0.308	0.884	0.4851\\
0.308	0.98	0.47702\\
0.404	0.02	0.46076\\
0.404	0.116	0.45641\\
0.404	0.212	0.45901\\
0.404	0.308	0.46052\\
0.404	0.404	0.4606\\
0.404	0.5	0.46471\\
0.404	0.596	0.45834\\
0.404	0.692	0.46234\\
0.404	0.788	0.45896\\
0.404	0.884	0.45615\\
0.404	0.98	0.45497\\
0.5	0.02	0.42953\\
0.5	0.116	0.42681\\
0.5	0.212	0.42897\\
0.5	0.308	0.43433\\
0.5	0.404	0.44142\\
0.5	0.5	0.4217\\
0.5	0.596	0.43118\\
0.5	0.692	0.43451\\
0.5	0.788	0.43601\\
0.5	0.884	0.44255\\
0.5	0.98	0.42914\\
0.596	0.02	0.41912\\
0.596	0.116	0.41163\\
0.596	0.212	0.42013\\
0.596	0.308	0.41738\\
0.596	0.404	0.40971\\
0.596	0.5	0.40793\\
0.596	0.596	0.40581\\
0.596	0.692	0.41794\\
0.596	0.788	0.41456\\
0.596	0.884	0.41357\\
0.596	0.98	0.41122\\
0.692	0.02	0.39429\\
0.692	0.116	0.39443\\
0.692	0.212	0.38862\\
0.692	0.308	0.39121\\
0.692	0.404	0.39671\\
0.692	0.5	0.38381\\
0.692	0.596	0.39437\\
0.692	0.692	0.38531\\
0.692	0.788	0.38534\\
0.692	0.884	0.38178\\
0.692	0.98	0.37813\\
0.788	0.02	0.37494\\
0.788	0.116	0.37347\\
0.788	0.212	0.37532\\
0.788	0.308	0.36932\\
0.788	0.404	0.36964\\
0.788	0.5	0.36783\\
0.788	0.596	0.36621\\
0.788	0.692	0.3656\\
0.788	0.788	0.36019\\
0.788	0.884	0.36081\\
0.788	0.98	0.35448\\
0.884	0.02	0.3457\\
0.884	0.116	0.34233\\
0.884	0.212	0.34335\\
0.884	0.308	0.34898\\
0.884	0.404	0.33891\\
0.884	0.5	0.34001\\
0.884	0.596	0.34323\\
0.884	0.692	0.3411\\
0.884	0.788	0.33948\\
0.884	0.884	0.33416\\
0.884	0.98	0.33654\\
0.98	0.02	0.31463\\
0.98	0.116	0.30987\\
0.98	0.212	0.31497\\
0.98	0.308	0.31709\\
0.98	0.404	0.31308\\
0.98	0.5	0.31432\\
0.98	0.596	0.31952\\
0.98	0.692	0.3136\\
0.98	0.788	0.31559\\
0.98	0.884	0.31114\\
0.98	0.98	0.30722\\
};

\end{axis}

\end{tikzpicture}
\subfloat[$\gamma_{\mathrm{t}}=\theta=0.5,\gamma_{\mathrm{q}}=0.9$]{\begin{tikzpicture}
\begin{axis}[%
colormap={mymap}{[1pt] rgb(0pt)=(1,1,1); rgb(127pt)=(1,0,0); rgb(255pt)=(.5,0,0)},
width=\q cm,
height=\q cm,
scale only axis,
xmin=0,
xmax=1,
ymin=0,
point meta max=.8,
point meta min=0,
ymax=1,
axis background/.style={fill=white},
axis x line*=bottom,
axis y line*=left,
xlabel={vaccination coverage $v$},
ylabel={responsibility $\sigma_{\mathrm{n}}$},
colorbar,
colorbar style={  at={(1.1,1)},width=.2cm, yticklabel style={
        /pgf/number format/fixed,
        /pgf/number format/precision=5
},},
xmajorgrids,
ymajorgrids,
]

\addplot[%
surf,
shader=flat, draw=none, mesh/rows=11]
table[row sep=crcr, point meta=\thisrow{c}] {%
x	y	c\\
0.02	0	0.77696\\
0.02	0.1	0.75139\\
0.02	0.2	0.72186\\
0.02	0.3	0.67994\\
0.02	0.4	0.62985\\
0.02	0.5	0.55627\\
0.02	0.6	0.4488\\
0.02	0.7	0.28861\\
0.02	0.8	0.01859\\
0.02	0.9	0.00092\\
0.02	1	0.00028\\
0.116	0	0.7473\\
0.116	0.1	0.7212\\
0.116	0.2	0.69498\\
0.116	0.3	0.65354\\
0.116	0.4	0.59866\\
0.116	0.5	0.52844\\
0.116	0.6	0.43437\\
0.116	0.7	0.29151\\
0.116	0.8	0.02938\\
0.116	0.9	0.00239\\
0.116	1	0.00117\\
0.212	0	0.71876\\
0.212	0.1	0.68757\\
0.212	0.2	0.6596\\
0.212	0.3	0.62076\\
0.212	0.4	0.57252\\
0.212	0.5	0.50541\\
0.212	0.6	0.42493\\
0.212	0.7	0.29215\\
0.212	0.8	0.04612\\
0.212	0.9	0.00698\\
0.212	1	0.0031\\
0.308	0	0.68206\\
0.308	0.1	0.65995\\
0.308	0.2	0.62682\\
0.308	0.3	0.5893\\
0.308	0.4	0.54692\\
0.308	0.5	0.47753\\
0.308	0.6	0.41308\\
0.308	0.7	0.29897\\
0.308	0.8	0.06731\\
0.308	0.9	0.02275\\
0.308	1	0.00719\\
0.404	0	0.65305\\
0.404	0.1	0.62447\\
0.404	0.2	0.59333\\
0.404	0.3	0.55698\\
0.404	0.4	0.51119\\
0.404	0.5	0.45412\\
0.404	0.6	0.3985\\
0.404	0.7	0.30269\\
0.404	0.8	0.11309\\
0.404	0.9	0.03379\\
0.404	1	0.02004\\
0.5	0	0.60871\\
0.5	0.1	0.5889\\
0.5	0.2	0.56386\\
0.5	0.3	0.52568\\
0.5	0.4	0.47332\\
0.5	0.5	0.42241\\
0.5	0.6	0.3811\\
0.5	0.7	0.29475\\
0.5	0.8	0.16239\\
0.5	0.9	0.061\\
0.5	1	0.0393\\
0.596	0	0.57148\\
0.596	0.1	0.54551\\
0.596	0.2	0.5179\\
0.596	0.3	0.48188\\
0.596	0.4	0.4507\\
0.596	0.5	0.41586\\
0.596	0.6	0.37654\\
0.596	0.7	0.30291\\
0.596	0.8	0.19106\\
0.596	0.9	0.10861\\
0.596	1	0.06085\\
0.692	0	0.51665\\
0.692	0.1	0.4954\\
0.692	0.2	0.47258\\
0.692	0.3	0.45197\\
0.692	0.4	0.42494\\
0.692	0.5	0.38429\\
0.692	0.6	0.35024\\
0.692	0.7	0.30655\\
0.692	0.8	0.20952\\
0.692	0.9	0.15999\\
0.692	1	0.13855\\
0.788	0	0.46667\\
0.788	0.1	0.4487\\
0.788	0.2	0.42714\\
0.788	0.3	0.40734\\
0.788	0.4	0.3868\\
0.788	0.5	0.37186\\
0.788	0.6	0.3486\\
0.788	0.7	0.29328\\
0.788	0.8	0.25926\\
0.788	0.9	0.21121\\
0.788	1	0.17095\\
0.884	0	0.40473\\
0.884	0.1	0.3893\\
0.884	0.2	0.37519\\
0.884	0.3	0.36873\\
0.884	0.4	0.35041\\
0.884	0.5	0.33838\\
0.884	0.6	0.32159\\
0.884	0.7	0.31231\\
0.884	0.8	0.28043\\
0.884	0.9	0.25637\\
0.884	1	0.24707\\
0.98	0	0.32285\\
0.98	0.1	0.32304\\
0.98	0.2	0.32108\\
0.98	0.3	0.31623\\
0.98	0.4	0.31566\\
0.98	0.5	0.3176\\
0.98	0.6	0.30943\\
0.98	0.7	0.3071\\
0.98	0.8	0.30314\\
0.98	0.9	0.29536\\
0.98	1	0.28502\\
};

\end{axis}

\end{tikzpicture}
\caption{Total fraction of infected individuals ($I_{\mathrm{v}}(t)+I_{\mathrm{n}}(t)+Q_{\mathrm{v}}(t)+Q_{\mathrm{n}}(t)$) at $T=200$ for different levels of the vaccination coverage $v$, with on the $y$-axis varying levels of: (a) the vaccine effectiveness against transmission $\gamma_{\mathrm{t}}$ and (b) severe symptoms $\gamma_{\mathrm{q}}$; (c) the level of homophily $\theta$; and the responsibility of non-vaccinated individuals $\sigma_{\mathrm{n}}$. Common parameters are $n=10\,000$, $\lambda=0.2$, $\beta=0.02$, $\sigma_{\mathrm{v}}=0.5$, $p_{\mathrm{q}}=0.2$, and $\tau=0.05$. Each data-point is obtained by averaging $10$ independent runs of the Markov process. }
 \label{fig:endemic}
\end{figure*}

\noindent In general, the expression of the epidemic threshold in Theorem~\ref{theo1} is non-trivial. On the one hand, one can immediately observe the following monotonicity properties, which can be derived directly from the stability analysis of the DFE, as reported in Appendix~\ref{app:monotonicity}. Doing so leads to the following proposition. 
\begin{proposition}\label{prop:monotonicity}
An increase in the infection probability $\lambda$ or in the effectiveness of the vaccine against severe illness $\gamma_{\mathrm{q}}$ increases the epidemic threshold $\bar{\tau}$ in \eqref{eq:threshold}; on the other hand, $\bar{\tau}$ decreases for an increase in the recovery rate $\beta$, the effectiveness of the vaccine against transmission $\gamma_{\mathrm{t}}$, the responsibilities $\sigma_{\mathrm{v}}$ and $\sigma_{\mathrm{n}}$, and the probability of developing severe illness $p_{\mathrm{q}}$.
\end{proposition}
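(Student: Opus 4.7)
The plan is to rewrite the threshold \eqref{eq:threshold} as $\bar{\tau} = \rho(M) - \beta$, where $M$ is the nonnegative $2 \times 2$ ``next-generation-type'' matrix obtained by restricting the Jacobian of \eqref{y} at the DFE to the infectious coordinates $(y_{\mathrm{n},\mathrm{i}}, y_{\mathrm{v},\mathrm{i}})$; the $\mathrm{Q}$ coordinates contribute no feedback into transmission and therefore decouple. Writing $a_n := \theta + (1-\theta)(1-v)$ and $a_v := \theta + (1-\theta)v$, one reads off from Proposition~\ref{prop:dynamics} that $M_{11} = 2\lambda(1-p_{\mathrm{q}})(1-\sigma_{\mathrm{n}}) a_n$, $M_{12} = 2\lambda(1-p_{\mathrm{q}})(1-\sigma_{\mathrm{v}})(1-\theta)(1-v)$, $M_{21} = 2\lambda(1-\gamma_{\mathrm{t}})[1-p_{\mathrm{q}}(1-\gamma_{\mathrm{q}})](1-\sigma_{\mathrm{n}})(1-\theta)v$, and $M_{22} = 2\lambda(1-\gamma_{\mathrm{t}})[1-p_{\mathrm{q}}(1-\gamma_{\mathrm{q}})](1-\sigma_{\mathrm{v}}) a_v$. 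Using the algebraic identity $a_n a_v - (1-\theta)^2 v(1-v) = \theta$ and noting that the products $M_{12}M_{21}$ and $M_{11}M_{22}$ share the same symmetric combination of susceptibility/infectivity factors, the standard $2\times 2$ formula $\rho(M) = \tfrac{1}{2}\bigl(\mathrm{tr}\,M + \sqrt{(\mathrm{tr}\,M)^2 - 4\det M}\bigr)$ collapses to exactly $\lambda\xi + \lambda\sqrt{\xi^2 - 4\theta(1-\gamma_{\mathrm{t}})(1-p_{\mathrm{q}})[1-p_{\mathrm{q}}(1-\gamma_{\mathrm{q}})](1-\sigma_{\mathrm{n}})(1-\sigma_{\mathrm{v}})}$, giving $\bar{\tau} = \rho(M) - \beta$.

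Given this identification, the dependence of $\bar{\tau}$ on $\lambda$ and $\beta$ is immediate from the closed form: linear in $\lambda$ with positive slope, and linear in $\beta$ with slope $-1$. For every other parameter, the plan is to invoke the standard fact that the spectral radius of a nonnegative matrix is nondecreasing in each of its entries (a direct consequence of Perron-Frobenius, and visible by inspection of the $2\times 2$ eigenvalue formula since all four entries of $M$ are nonnegative). It therefore suffices to tabulate, for each parameter, the sign of the dependence of each entry of $M$.

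Carrying out this bookkeeping: $\sigma_{\mathrm{n}}$ enters only via $(1-\sigma_{\mathrm{n}})$ in $M_{11}$ and $M_{21}$; $\sigma_{\mathrm{v}}$ via $(1-\sigma_{\mathrm{v}})$ in $M_{12}$ and $M_{22}$; and $\gamma_{\mathrm{t}}$ via $(1-\gamma_{\mathrm{t}})$ in $M_{21}$ and $M_{22}$. In each of these three cases the affected entries are nonincreasing in the parameter, so $\rho(M)$, and hence $\bar{\tau}$, decreases as claimed. The parameter $\gamma_{\mathrm{q}}$ enters $M_{21}$ and $M_{22}$ only through $1 - p_{\mathrm{q}}(1-\gamma_{\mathrm{q}})$, which is nondecreasing in $\gamma_{\mathrm{q}}$, so those entries increase and $\bar{\tau}$ increases. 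The one case needing slightly more attention is $p_{\mathrm{q}}$, which influences every entry of $M$ via two distinct factors: $(1-p_{\mathrm{q}})$ in $M_{11}, M_{12}$ and $1 - p_{\mathrm{q}}(1-\gamma_{\mathrm{q}})$ in $M_{21}, M_{22}$. Both factors are nonincreasing in $p_{\mathrm{q}} \in [0,1]$ (the second using $1-\gamma_{\mathrm{q}} \geq 0$), so every entry of $M$ is nonincreasing and $\bar{\tau}$ decreases, completing the list.

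The only potentially delicate step is the initial spectral identification $\bar{\tau} = \rho(M) - \beta$: verifying that the eigenvalue formula matches \eqref{eq:threshold} requires the two simplifications noted above, but they are purely algebraic and should slot naturally into, or be recycled from, the proof of Theorem~\ref{theo1} in Appendix~\ref{prooftheo1}. Once that identification is secured, the monotonicities reduce to the parameter-by-parameter table sketched above and require no further analytical effort.
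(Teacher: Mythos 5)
Your proof is correct and follows essentially the same route as the paper's (one-sentence) argument: both reduce to the $(y_{\mathrm{n},\mathrm{i}},y_{\mathrm{v},\mathrm{i}})$ block of the Jacobian at the DFE via its block-triangular structure and then exploit monotonicity of the dominant eigenvalue of that nonnegative block in its entries, together with sign bookkeeping of how each parameter enters. Your write-up simply makes explicit (via $\bar{\tau}=\rho(M)-\beta$, the identity $a_{\mathrm{n}}a_{\mathrm{v}}-(1-\theta)^2v(1-v)=\theta$, and the Perron--Frobenius monotonicity) the steps the paper leaves implicit.
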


Interestingly, Proposition~\ref{prop:monotonicity} states that the vaccine effectiveness against transmission always facilitates the eradication of an epidemic outbreak, while its effectiveness against severe illness increases the threshold, hindering the controllability of the disease. This is due to the fact that infected individuals with non-severe symptoms may not be detected, increasing the disease transmission, which may offer an explanation for the epidemic outbreaks that occurred after the COVID-19 vaccination campaign. 

On the other hand, the impact of the remaining parameters (the vaccination coverage $v$ and the homophily $\theta$) is less intuitive. Of particular interest is to understand the role of vaccination coverage in the facilitation or deterrence of epidemic outbreaks. To shed light on this important question, we perform a sensitivity analysis of the threshold in \eqref{eq:threshold}. Our result is summarized in the following proposition.

\begin{proposition}\label{prop:sensitivity}
An increase in the vaccination coverage $v$ decreases the epidemic threshold $\bar{\tau}$ in \eqref{eq:threshold} if and only if
\begin{equation}\label{eq:sensitivity}
 (1-\gamma_{\mathrm{t}})[1-p_{\mathrm{q}}(1-\gamma_{\mathrm{q}})](1-\sigma_{\mathrm{v}})-(1-p_{\mathrm{q}})(1-\sigma_{\mathrm{n}})<0.
\end{equation}
\end{proposition}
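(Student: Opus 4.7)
The plan is to reduce the sensitivity question to a sign analysis of a partial derivative, exploiting the closed-form threshold \eqref{eq:threshold}. The key observation is that the constant under the square root,
\[
C := 4\theta(1-\gamma_{\mathrm{t}})(1-p_{\mathrm{q}})[1-p_{\mathrm{q}}(1-\gamma_{\mathrm{q}})](1-\sigma_{\mathrm{n}})(1-\sigma_{\mathrm{v}}),
\]
does not depend on $v$. Hence the entire $v$-dependence of $\bar\tau$ is funneled through $\xi=\xi(v)$, and by the chain rule
\[
\frac{\partial \bar\tau}{\partial v} \;=\; \lambda\, \frac{\partial \xi}{\partial v}\left(1+\frac{\xi}{\sqrt{\xi^{2}-C}}\right).
\]

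The next step is to argue that the factor in parentheses is strictly positive, so that the sign of $\partial \bar\tau/\partial v$ agrees with the sign of $\partial \xi/\partial v$. Here I would note that $\xi>0$ under the mild nondegeneracy assumption $\sigma_{\mathrm{n}},\sigma_{\mathrm{v}}<1$ (the degenerate case collapses the threshold to $-\beta$ and the claim becomes vacuous), and that $\sqrt{\xi^{2}-C}\le \xi$ since $C\ge 0$; the threshold \eqref{eq:threshold} is defined precisely when $\xi^{2}\ge C$, as ensured by the Routh--Hurwitz analysis leading to Theorem~\ref{theo1}. This guarantees the parenthesized factor is at least $1$, hence strictly positive.

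The final, and only real computation, is to differentiate $\xi$ in \eqref{xi} with respect to $v$. Since $[\theta+(1-\theta)(1-v)]$ contributes $-(1-\theta)$ and $[\theta+(1-\theta)v]$ contributes $+(1-\theta)$, a direct calculation yields
\[
\frac{\partial \xi}{\partial v} \;=\; (1-\theta)\Bigl[(1-\gamma_{\mathrm{t}})[1-p_{\mathrm{q}}(1-\gamma_{\mathrm{q}})](1-\sigma_{\mathrm{v}}) - (1-p_{\mathrm{q}})(1-\sigma_{\mathrm{n}})\Bigr].
\]
Because $\theta\in[0,1)$ implies $1-\theta>0$, this derivative is negative if and only if \eqref{eq:sensitivity} holds, which combined with the previous step gives the equivalence in the proposition.

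I do not expect serious obstacles: the proposition is essentially a one-variable calculus statement on a closed-form expression. The only subtlety worth flagging is the handling of the square root: one must verify $\xi^{2}\ge C$ (so the derivative is well-defined and the parenthesized factor has the right sign) and address the boundary case $\xi^{2}=C$ (where $\partial\bar\tau/\partial v$ could be unbounded but still has the sign of $\partial\xi/\partial v$, since it is a positive multiple of it). Both are handled by the standing assumptions under which Theorem~\ref{theo1} yields a real threshold.
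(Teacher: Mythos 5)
Your proposal is correct and follows essentially the same route as the paper's proof, which likewise computes $\partial\bar\tau/\partial v$ and observes that it equals the left-hand side of \eqref{eq:sensitivity} multiplied by a positive quantity; you have simply made explicit the positive factor $\lambda(1-\theta)\bigl(1+\xi/\sqrt{\xi^{2}-C}\bigr)$ and the fact that $\xi^{2}-C>0$ (established in the paper's Appendix~A). No changes needed.
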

\begin{proof}
The condition in \eqref{eq:sensitivity} is obtained by computing the partial derivative of $\bar \tau$ with respect to $v$, which is equal to the left-hand size of \eqref{eq:sensitivity}, multiplied by a positive quantity.
\end{proof}

From Proposition~\ref{prop:sensitivity}, we observe that the vaccination coverage has a non-trivial effect on the epidemic threshold. In particular, whether an increase in the vaccination coverage facilitates the prevention of an epidemic outbreak depends on the characteristics of the vaccine (i.e., its effectiveness against transmission and severe illness), the probability of developing severe illness, and the responsibility of vaccinated and non-vaccinated individuals. This is consistent with the observations made on a simpler model, in~\cite{frieswijk2022ecc}.

Finally, Figure~\ref{fig:threshold} reports some observations concerning the role of the homophily level $\theta$. Our numerical simulations show that high levels of homophily facilitate the spread of epidemic diseases, in particular when combined with lower levels of responsibility for the non-vaccinated individuals. This suggests that neglecting the polarization that can emerge during a pandemic, with clusters of individuals who disregard the use of protective measures and refuse to be vaccinated, may lead to a dangerous underestimation of the risk of a local outbreak.

To conclude this section, we observe that, while vaccination may surprisingly act as a double-edged sword in preventing epidemic outbreaks, sometimes calling for an increased testing effort, its effect on mitigating endemic diseases is more predictable. In fact, the Monte Carlo simulations reported in Fig.~\ref{fig:endemic} suggest that increasing vaccination coverage of a population is always beneficial (the number of infections always decreases if we increase the vaccination coverage $v$), excluding the specific scenario in which non-vaccinated individuals are significantly more responsible than their vaccinated peers. In this scenario, increasing the vaccination coverage may be harmful, as illustrated in Fig.~\ref{fig:endemic_responsability}.

\section{Numerical results}\label{sec:simulations}

\noindent In this section, we will expand on Remark~\ref{rem:network} employing numerical simulations on a backbone, while focusing on a case study motivated by the COVID-19 pandemic. 

In Remarks~\ref{rem:npi}--\ref{rem:sigma}, we discussed how our modeling framework is amenable to several generalizations. For some of them, our analytical findings can be readily extended. For instance, Theorem~\ref{theo1} can be extended to incorporate non-pharmaceutical interventions as suggested in Remark~\ref{rem:npi} by substituting $\lambda$ with $(1-\eta)\lambda$ in \eqref{eq:threshold}, where $\eta\in [0,1]$ is the effectiveness of non-pharmaceutical interventions. 
Other extensions, instead, increase the complexity of the dynamics, thereby hindering its analytical treatment and preventing a direct extension of the mean-field approach used to derive our theoretical findings. However, the implementation of our model, grounded in the activity-driven network formalism, enables its numerical treatment via fast Monte Carlo simulation campaigns. 

Here, following Remark~\ref{rem:network}, we extend our modeling framework by embedding it onto a network structure which constrains the interactions that an individual may establish. Inspired by activity-driven networks with a backbone~\cite{karsai2014backbone,surano2019backbone,Nadini2020}, we assume that contacts are constrained by an underlying time-invariant network $\mathcal H=(\mathcal V,\mathcal F)$, and we define the social contacts of $j$ as the neighbors of $j$ on the backbone, that is, $\mathcal N_j:=\{k:(j,k)\in\mathcal F\}$. The network formation process described in Section~\ref{sec:formation} is modified as follows. When the clock associated with individual $j$ ticks, then $j$ activates and has an interaction with another individual $k$. With probability $\theta$, $k$ is selected uniformly at random from the social contacts of $j$ that are in $j$'s subpopulation, $\mathcal N_j\cap \mathcal V_\alpha$ (where $\alpha\in\{\mathrm{v},\mathrm{n}\}$ is the vaccination status of $j$); whereas with probability $1-\theta$, $k$ is chosen uniformly at random from the entire set $\mathcal N_j$.

We consider a case study inspired by the ongoing COVID-19 pandemic and global vaccination campaign. Specifically, we utilize model parameters calibrated to reflect some characteristics of COVID-19 and of the situation in the Netherlands as of early November 2021, estimated in our previous work~\cite{frieswijk2022ecc} from clinical and epidemiological data~\cite{phucharoen2020characteristics,dagan2021bnt162b2}, and reported in Table~\ref{tab:covid}.

\begin{figure}
\vspace{0.25cm}
\centering
 \subfloat[Eradication probability]{
%
%
\begin{tikzpicture}

\begin{axis}[%
 axis lines=left,
   x  axis line style={->},
      y  axis line style={-},
width=\ll cm,
height=\h cm,
xlabel={testing rate, $\tau$},
ylabel={eradication},y tick label style={/pgf/number format/.cd,%
          scaled y ticks = false,
          set thousands separator={},
          fixed},
          x tick label style={/pgf/number format/.cd,%
          scaled y ticks = false,
          set thousands separator={},
          fixed},
scale only axis,
xmin=0,
xmax=0.195,
ymin=-.02,
ymax=1.02,
 extra y ticks ={0},
    extra y tick labels={$0$},
     extra x ticks ={0},
    extra x tick labels={$0$},
axis background/.style={fill=white}
]

\addplot [ultra thick, dashed, color=gray]
  table[row sep=crcr]{%
0.1087	0\\
0.1087	1\\
};

\addplot [ultra thick, color=cyan]
  table[row sep=crcr]{%
0	0\\
0.01	0\\
0.02	0\\
0.03	0\\
0.04	0\\
0.05	0\\
0.06	0\\
0.07	0\\
0.08	0\\
0.09	0\\
0.1	0\\
0.11	0\\
0.12	.1\\
0.13	.6\\
0.14	.9\\
0.15	1\\
0.16	1\\
0.17	1\\
0.18	1\\
0.19	1\\
0.2	1\\
};

\addplot [ultra thick, color=blue]
  table[row sep=crcr]{%
0	0\\
0.01	0\\
0.02	0\\
0.03	0\\
0.04	0\\
0.05	0\\
0.06	0\\
0.07	0\\
0.08	0\\
0.09	0\\
0.1	0\\
0.11	.3\\
0.12	.7\\
0.13	.9\\
0.14	1\\
0.15	1\\
0.16	1\\
0.17	1\\
0.18	1\\
0.19	1\\
0.2	1\\
};

\addplot [ultra thick, color=red]
  table[row sep=crcr]{%
0	0\\
0.01	0\\
0.02	0\\
0.03	0\\
0.04	0\\
0.05	0\\
0.06	0\\
0.07	0\\
0.08	0\\
0.09	0\\
0.1	0\\
0.11	.3\\
0.12	.6\\
0.13	1\\
0.14	1\\
0.15	1\\
0.16	1\\
0.17	1\\
0.18	1\\
0.19	1\\
0.2	1\\
};

\end{axis}

\end{tikzpicture}
%
%
\begin{tikzpicture}

\begin{axis}[%
 axis lines=left,
   x  axis line style={->},
      y  axis line style={->},
width=\ll cm,
height=\h cm,
xlabel={testing rate, $\tau$},
ylabel={infections},y tick label style={/pgf/number format/.cd,%
          scaled y ticks = false,
          set thousands separator={},
          fixed},
          x tick label style={/pgf/number format/.cd,%
          scaled y ticks = false,
          set thousands separator={},
          fixed},
scale only axis,
xmin=0,
xmax=0.195,
ymin=0,
ymax=0.059,
 extra y ticks ={0},
    extra y tick labels={$0$},
     extra x ticks ={0},
    extra x tick labels={$0$},
legend style={at={(1.05,1.08)},legend cell align=left, align=left, draw=none,fill=none, font=\footnotesize},
axis background/.style={fill=white}
]

\addplot [color=cyan, line width=1.5pt]
 plot [error bars/.cd, y dir = both, y explicit]
 table[row sep=crcr, y error plus index=2, y error minus index=3]{%
0	0.043799	0.00144643352228853	0.00144643352228853\\
0.01	0.038847	0.00166133674419125	0.00166133674419125\\
0.02	0.035025	0.00102642076752178	0.00102642076752178\\
0.03	0.03022	0.00124184214133681	0.00124184214133681\\
0.04	0.026133	0.00173160315130228	0.00173160315130228\\
0.05	0.022407	0.00140292878849926	0.00140292878849926\\
0.06	0.018632	0.00246584514550286	0.00246584514550286\\
0.07	0.015079	0.0019263808923471	0.0019263808923471\\
0.08	0.011354	0.00186163272167203	0.00186163272167203\\
0.09	0.008176	0.00295848609501549	0.00295848609501549\\
0.1	0.005263	0.0018072230990113	0.0018072230990113\\
0.11	0.002667	0.00209540500753434	0.00209540500753434\\
0.12	0.000985	0.00202439157674596	0.00202439157674596\\
0.13	0.000412	0.000782715147164024	0.000782715147164024\\
0.14	3e-05	0.0001764	0.0001764\\
0.15	0	0	0\\
0.16	0	0	0\\
0.17	0	0	0\\
0.18	0	0	0\\
0.19	0	0	0\\
0.2	0	0	0\\
};
\addlegendentry{BA}

\addplot [color=blue, line width=1.5pt]
 plot [error bars/.cd, y dir = both, y explicit]
 table[row sep=crcr, y error plus index=2, y error minus index=3]{%
0	0.046435	0.00151042768247937	0.00151042768247937\\
0.01	0.041145	0.00101807803630173	0.00101807803630173\\
0.02	0.035452	0.000982966869024588	0.000982966869024588\\
0.03	0.030998	0.00116043918306821	0.00116043918306821\\
0.04	0.025353	0.00163596140834679	0.00163596140834679\\
0.05	0.02082	0.00233855188695911	0.00233855188695911\\
0.06	0.01625	0.0012650126070518	0.0012650126070518\\
0.07	0.011191	0.00195248762311058	0.00195248762311058\\
0.08	0.007796	0.0022348160314442	0.0022348160314442\\
0.09	0.003328	0.00222783639471125	0.00222783639471125\\
0.1	0.001906	0.00160811281395305	0.00160811281395305\\
0.11	0.000396	0.000719039563862796	0.000719039563862796\\
0.12	0.000146	0.000534559233761797	0.000534559233761797\\
0.13	1.5e-05	8.82e-05	8.82e-05\\
0.14	2.5e-05	0.000147	0.000147\\
0.15	0	0	0\\
0.16	0	0	0\\
0.17	0	0	0\\
0.18	0	0	0\\
0.19	0	0	0\\
0.2	0	0	0\\
};
\addlegendentry{ER}

\addplot [color=red, line width=1.5pt]
 plot [error bars/.cd, y dir = both, y explicit]
 table[row sep=crcr, y error plus index=2, y error minus index=3]{%
0	0.046066	0.00127074525787036	0.00127074525787036\\
0.01	0.041592	0.00209174887823563	0.00209174887823563\\
0.02	0.036247	0.00215548410191307	0.00215548410191307\\
0.03	0.031044	0.00136931497267795	0.00136931497267795\\
0.04	0.025665	0.00132192519304233	0.00132192519304233\\
0.05	0.021322	0.00273208533278154	0.00273208533278154\\
0.06	0.016386	0.00193330330533002	0.00193330330533002\\
0.07	0.011634	0.00163967091527538	0.00163967091527538\\
0.08	0.007826	0.00163090845064951	0.00163090845064951\\
0.09	0.00461	0.00223374657112216	0.00223374657112216\\
0.1	0.001898	0.00221914589551926	0.00221914589551926\\
0.11	0.000512	0.00118328821577839	0.00118328821577839\\
0.12	7.1e-05	0.00028812	0.00028812\\
0.13	0	0	0\\
0.14	0	0	0\\
0.15	0	0	0\\
0.16	0	0	0\\
0.17	0	0	0\\
0.18	0	0	0\\
0.19	0	0	0\\
0.2	0	0	0\\
};
\addlegendentry{Complete}

\end{axis}
\end{tikzpicture}
 \caption{Numerical estimation (averaged over $10$ independent runs) of (a) the eradication probability and (b) the steady-state fraction of infected individuals ($I_{\mathrm{v}}(t)+I_{\mathrm{n}}(t)+Q_{\mathrm{v}}(t)+Q_{\mathrm{n}}(t)$) at $T=200$, for increasing values of the control parameter $\tau$ and different backbone networks. The gray vertical dashed line is the threshold, computed analytically from Theorem~\ref{theo1}, which is exact for the complete network. Common parameters are $\sigma_{\mathrm{n}}=\theta=0.5$; the rest of the parameters are summarized in Table~\ref{tab:covid}.}
 \label{fig:network1}
\end{figure}
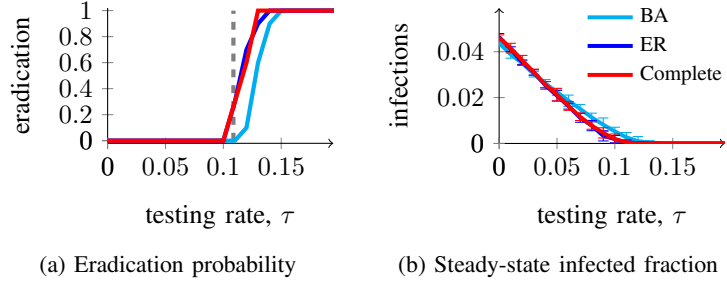
\begin{table}
\centering
\caption{Parameter values of the COVID-inspired case study.}\label{tab:covid}
\begin{tabular}{r|c|c|c|c|c|c|c}
 & $n$&$\lambda$&$\beta$&$v$&$p_{\text{q}}$&$\gamma_{\text{t}}$&$\gamma_{\text{q}}$\\
\hline
value & $10\,000$& $0.36$& $0.1$& $0.821$&$0.19$&$0.65$&$0.92$
\end{tabular}
\end{table}

First, we perform a set of Monte Carlo simulations to show that the threshold behavior, proved analytically in Section~\ref{section:mainresults} in the absence of backbone networks, is an inherent property of the epidemic model and is robust to the addition of a backbone of social contacts. To this aim, we numerically estimate the probability of eradicating the disease and the steady-state number of infections for different values of the testing rate $\tau$ and on three different network structures: an Erdős–Rényi random graph, where each undirected link is present with probability $p=0.01$; a Barab\'asi-Albert network, where degrees are power-law distributed, and a complete network, where our theoretical results from Theorem~\ref{theo1} can be directly applied. Our simulations, reported in Fig.~\ref{fig:network1}, suggest that threshold behavior is present for all types of networks. Moreover, the value of the threshold seems quite robust to the presence of a backbone network. However, a certain degree of shifting can be observed, calling for further numerical studies.

In view of these observations, we perform a more extensive set of simulations to shed light on how the presence of a network structure affects the epidemic threshold. 
In our simulations, we consider a population of $n=10\,000$ individuals, whose contacts occur on a backbone network generated as an Erdős–Rényi random graph, where each undirected link $(j,k)$ is present in $\mathcal F$ with probability $p=0.01$. In Fig.~\ref{fig:network}, we report the epidemic threshold and the long-term total fraction of infected individuals, for different values of homophily $\theta$ and responsibility of non-vaccinated individuals $\sigma_{\mathrm{n}}$. The threshold is estimated, similar to~\cite{Moinet2017pre,burstiness}, employing a Monte Carlo-based approach detailed in Appendix~\ref{app:numerics}.

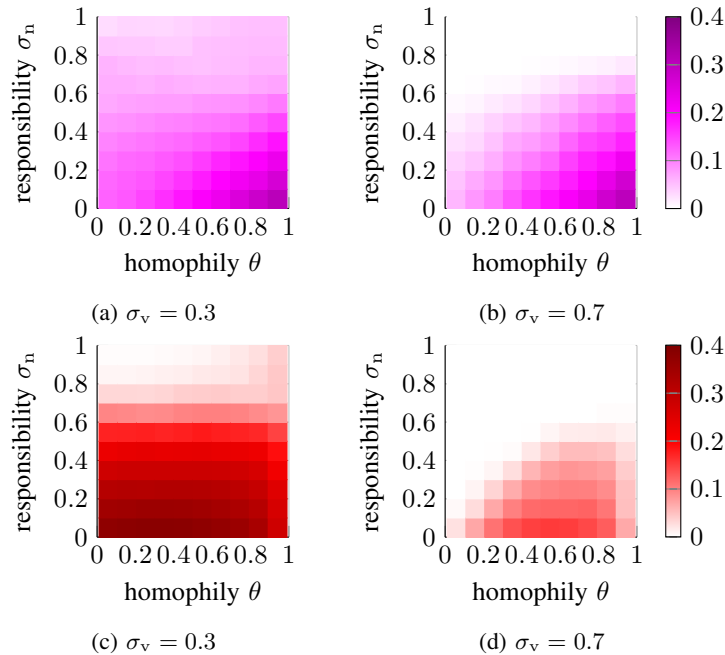
\begin{figure}
\vspace{0.25cm}
 \centering
  \subfloat[$\sigma_{\mathrm{v}}=0.3$]{\begin{tikzpicture}
\begin{axis}[%
colormap={mymap}{[1pt] rgb(0pt)=(1,1,1); rgb(127pt)=(1,0,1); rgb(255pt)=(.5,0,.5)},
width=\qq cm,
height=\qq cm,
scale only axis,
xmin=0,
xmax=1,
ymin=0,
point meta max=.4,
point meta min=0,
ymax=1,
axis background/.style={fill=white},
axis x line*=bottom,
axis y line*=left,
xlabel={homophily $\theta$},
ylabel={responsibility $\sigma_{\mathrm{n}}$},
xmajorgrids,
ymajorgrids,]

\addplot[%
surf,
shader=flat, draw=none, mesh/rows=11]
table[row sep=crcr, point meta=\thisrow{c}] {%
x	y	c\\
0.01	0	0.123272727272727\\
0.01	0.1	0.116\\
0.01	0.2	0.116\\
0.01	0.3	0.0898181818181818\\
0.01	0.4	0.0963636363636364\\
0.01	0.5	0.0701818181818182\\
0.01	0.6	0.0563636363636364\\
0.01	0.7	0.0563636363636364\\
0.01	0.8	0.0629090909090909\\
0.01	0.9	0.0301818181818182\\
0.01	1	0.012\\
0.108	0	0.129818181818182\\
0.108	0.1	0.123272727272727\\
0.108	0.2	0.123272727272727\\
0.108	0.3	0.109454545454545\\
0.108	0.4	0.0898181818181818\\
0.108	0.5	0.076\\
0.108	0.6	0.0694545454545455\\
0.108	0.7	0.0694545454545455\\
0.108	0.8	0.0498181818181818\\
0.108	0.9	0.036\\
0.108	1	0.03\\
0.206	0	0.136363636363636\\
0.206	0.1	0.136363636363636\\
0.206	0.2	0.123272727272727\\
0.206	0.3	0.116\\
0.206	0.4	0.0963636363636364\\
0.206	0.5	0.0832727272727273\\
0.206	0.6	0.0629090909090909\\
0.206	0.7	0.0498181818181818\\
0.206	0.8	0.0498181818181818\\
0.206	0.9	0.0367272727272727\\
0.206	1	0.03\\
0.304	0	0.169818181818182\\
0.304	0.1	0.142909090909091\\
0.304	0.2	0.129818181818182\\
0.304	0.3	0.116\\
0.304	0.4	0.102909090909091\\
0.304	0.5	0.0832727272727273\\
0.304	0.6	0.0701818181818182\\
0.304	0.7	0.0563636363636364\\
0.304	0.8	0.0498181818181818\\
0.304	0.9	0.0367272727272727\\
0.304	1	0.0236363636363636\\
0.402	0	0.182909090909091\\
0.402	0.1	0.156727272727273\\
0.402	0.2	0.150181818181818\\
0.402	0.3	0.123272727272727\\
0.402	0.4	0.109454545454545\\
0.402	0.5	0.0898181818181818\\
0.402	0.6	0.0636363636363636\\
0.402	0.7	0.0694545454545455\\
0.402	0.8	0.036\\
0.402	0.9	0.036\\
0.402	1	0.0301818181818182\\
0.5	0	0.209818181818182\\
0.5	0.1	0.182909090909091\\
0.5	0.2	0.149454545454545\\
0.5	0.3	0.129818181818182\\
0.5	0.4	0.109454545454545\\
0.5	0.5	0.0898181818181818\\
0.5	0.6	0.0629090909090909\\
0.5	0.7	0.0498181818181818\\
0.5	0.8	0.0432727272727273\\
0.5	0.9	0.0432727272727273\\
0.5	1	0.03\\
0.598	0	0.230181818181818\\
0.598	0.1	0.196\\
0.598	0.2	0.182909090909091\\
0.598	0.3	0.149454545454545\\
0.598	0.4	0.116\\
0.598	0.5	0.102909090909091\\
0.598	0.6	0.076\\
0.598	0.7	0.0498181818181818\\
0.598	0.8	0.0498181818181818\\
0.598	0.9	0.0563636363636364\\
0.598	1	0.03\\
0.696	0	0.256363636363636\\
0.696	0.1	0.222909090909091\\
0.696	0.2	0.196\\
0.696	0.3	0.156\\
0.696	0.4	0.116\\
0.696	0.5	0.0963636363636364\\
0.696	0.6	0.0629090909090909\\
0.696	0.7	0.0563636363636364\\
0.696	0.8	0.0498181818181818\\
0.696	0.9	0.0432727272727273\\
0.696	1	0.0432727272727273\\
0.794	0	0.302909090909091\\
0.794	0.1	0.243272727272727\\
0.794	0.2	0.209818181818182\\
0.794	0.3	0.176363636363636\\
0.794	0.4	0.136363636363636\\
0.794	0.5	0.109454545454545\\
0.794	0.6	0.076\\
0.794	0.7	0.0498181818181818\\
0.794	0.8	0.0563636363636364\\
0.794	0.9	0.0563636363636364\\
0.794	1	0.0498181818181818\\
0.892	0	0.316\\
0.892	0.1	0.262909090909091\\
0.892	0.2	0.229454545454545\\
0.892	0.3	0.196\\
0.892	0.4	0.156\\
0.892	0.5	0.116\\
0.892	0.6	0.0767272727272727\\
0.892	0.7	0.0694545454545455\\
0.892	0.8	0.0563636363636364\\
0.892	0.9	0.0432727272727273\\
0.892	1	0.0498181818181818\\
0.99	0	0.342909090909091\\
0.99	0.1	0.309454545454546\\
0.99	0.2	0.262909090909091\\
0.99	0.3	0.222909090909091\\
0.99	0.4	0.169818181818182\\
0.99	0.5	0.136363636363636\\
0.99	0.6	0.0898181818181818\\
0.99	0.7	0.0498181818181818\\
0.99	0.8	0.0563636363636364\\
0.99	0.9	0.0563636363636364\\
0.99	1	0.0498181818181818\\
};

\end{axis}

\end{tikzpicture}
  \subfloat[$\sigma_{\mathrm{v}}=0.7$]{\begin{tikzpicture}
\begin{axis}[%
colormap={mymap}{[1pt] rgb(0pt)=(1,1,1); rgb(127pt)=(1,0,1); rgb(255pt)=(.5,0,.5)},
width=\qq cm,
height=\qq cm,
scale only axis,
xmin=0,
xmax=1,
ymin=0,
point meta max=.4,
point meta min=0,
ymax=1,
axis background/.style={fill=white},
axis x line*=bottom,
axis y line*=left,
xlabel={homophily $\theta$},
ylabel={responsibility $\sigma_{\mathrm{n}}$},
xmajorgrids,
ymajorgrids,colorbar,
colorbar style={at={(1.15,1)}, width=.2cm, yticklabel style={
        /pgf/number format/fixed,
        /pgf/number format/precision=5
},},
]

\addplot[%
surf,
shader=flat, draw=none, mesh/rows=11]
table[row sep=crcr, point meta=\thisrow{c}] {%
x	y	c\\
0.01	0	0.0432727272727273\\
0.01	0.1	0.0367272727272727\\
0.01	0.2	0.03\\
0.01	0.3	0.03\\
0.01	0.4	0.012\\
0.01	0.5	0.006\\
0.01	0.6	0\\
0.01	0.7	0\\
0.01	0.8	0\\
0.01	0.9	0\\
0.01	1	0\\
0.108	0	0.0767272727272727\\
0.108	0.1	0.0694545454545455\\
0.108	0.2	0.0498181818181818\\
0.108	0.3	0.03\\
0.108	0.4	0.024\\
0.108	0.5	0.012\\
0.108	0.6	0\\
0.108	0.7	0\\
0.108	0.8	0\\
0.108	0.9	0\\
0.108	1	0\\
0.206	0	0.102909090909091\\
0.206	0.1	0.0767272727272727\\
0.206	0.2	0.0694545454545455\\
0.206	0.3	0.036\\
0.206	0.4	0.036\\
0.206	0.5	0.018\\
0.206	0.6	0.006\\
0.206	0.7	0\\
0.206	0.8	0\\
0.206	0.9	0\\
0.206	1	0\\
0.304	0	0.123272727272727\\
0.304	0.1	0.103636363636364\\
0.304	0.2	0.0832727272727273\\
0.304	0.3	0.0636363636363636\\
0.304	0.4	0.0563636363636364\\
0.304	0.5	0.03\\
0.304	0.6	0.012\\
0.304	0.7	0\\
0.304	0.8	0\\
0.304	0.9	0\\
0.304	1	0\\
0.402	0	0.156\\
0.402	0.1	0.149454545454545\\
0.402	0.2	0.103636363636364\\
0.402	0.3	0.0898181818181818\\
0.402	0.4	0.0629090909090909\\
0.402	0.5	0.036\\
0.402	0.6	0.012\\
0.402	0.7	0\\
0.402	0.8	0\\
0.402	0.9	0\\
0.402	1	0\\
0.5	0	0.182909090909091\\
0.5	0.1	0.149454545454545\\
0.5	0.2	0.136363636363636\\
0.5	0.3	0.076\\
0.5	0.4	0.0767272727272727\\
0.5	0.5	0.0498181818181818\\
0.5	0.6	0.03\\
0.5	0.7	0.006\\
0.5	0.8	0\\
0.5	0.9	0\\
0.5	1	0\\
0.598	0	0.217090909090909\\
0.598	0.1	0.182909090909091\\
0.598	0.2	0.150181818181818\\
0.598	0.3	0.116\\
0.598	0.4	0.0963636363636364\\
0.598	0.5	0.0694545454545455\\
0.598	0.6	0.036\\
0.598	0.7	0.006\\
0.598	0.8	0\\
0.598	0.9	0\\
0.598	1	0\\
0.696	0	0.236\\
0.696	0.1	0.196\\
0.696	0.2	0.177090909090909\\
0.696	0.3	0.156727272727273\\
0.696	0.4	0.116\\
0.696	0.5	0.0832727272727273\\
0.696	0.6	0.0563636363636364\\
0.696	0.7	0.018\\
0.696	0.8	0\\
0.696	0.9	0\\
0.696	1	0\\
0.794	0	0.269454545454546\\
0.794	0.1	0.236\\
0.794	0.2	0.196\\
0.794	0.3	0.163272727272727\\
0.794	0.4	0.136363636363636\\
0.794	0.5	0.102909090909091\\
0.794	0.6	0.0701818181818182\\
0.794	0.7	0.018\\
0.794	0.8	0\\
0.794	0.9	0\\
0.794	1	0\\
0.892	0	0.316\\
0.892	0.1	0.276\\
0.892	0.2	0.236\\
0.892	0.3	0.156\\
0.892	0.4	0.156\\
0.892	0.5	0.109454545454545\\
0.892	0.6	0.0701818181818182\\
0.892	0.7	0.03\\
0.892	0.8	0\\
0.892	0.9	0\\
0.892	1	0\\
0.99	0	0.316\\
0.99	0.1	0.309454545454546\\
0.99	0.2	0.262909090909091\\
0.99	0.3	0.229454545454545\\
0.99	0.4	0.156\\
0.99	0.5	0.142909090909091\\
0.99	0.6	0.0898181818181818\\
0.99	0.7	0.0432727272727273\\
0.99	0.8	0\\
0.99	0.9	0\\
0.99	1	0\\
};

\end{axis}

\end{tikzpicture}
  \subfloat[$\sigma_{\mathrm{v}}=0.3$]{\begin{tikzpicture}
\begin{axis}[%
colormap={mymap}{[1pt] rgb(0pt)=(1,1,1); rgb(127pt)=(1,0,0); rgb(255pt)=(.5,0,0)},
width=\qq cm,
height=\qq cm,
scale only axis,
xmin=0,
xmax=1,
ymin=0,
point meta max=.3,
point meta min=0,
ymax=1,
axis background/.style={fill=white},
axis x line*=bottom,
axis y line*=left,
xlabel={homophily $\theta$},
ylabel={responsibility $\sigma_{\mathrm{n}}$},
xmajorgrids,
ymajorgrids,]

\addplot[%
surf,
shader=flat, draw=none, mesh/rows=11]
table[row sep=crcr, point meta=\thisrow{c}] {%
x	y	c\\
0.01	0	0.28921\\
0.01	0.1	0.2718\\
0.01	0.2	0.24514\\
0.01	0.3	0.22149\\
0.01	0.4	0.18824\\
0.01	0.5	0.15571\\
0.01	0.6	0.10154\\
0.01	0.7	0.03549\\
0.01	0.8	0.01018\\
0.01	0.9	0.00127\\
0.01	1	0.00088\\
0.108	0	0.29264\\
0.108	0.1	0.27154\\
0.108	0.2	0.2504\\
0.108	0.3	0.23053\\
0.108	0.4	0.18927\\
0.108	0.5	0.15298\\
0.108	0.6	0.10746\\
0.108	0.7	0.03995\\
0.108	0.8	0.00896\\
0.108	0.9	0.0038\\
0.108	1	0.00052\\
0.206	0	0.29932\\
0.206	0.1	0.27709\\
0.206	0.2	0.25027\\
0.206	0.3	0.22518\\
0.206	0.4	0.19816\\
0.206	0.5	0.16609\\
0.206	0.6	0.09436\\
0.206	0.7	0.0363\\
0.206	0.8	0.0091\\
0.206	0.9	0.00161\\
0.206	1	0.0012\\
0.304	0	0.29292\\
0.304	0.1	0.27991\\
0.304	0.2	0.25551\\
0.304	0.3	0.22294\\
0.304	0.4	0.19548\\
0.304	0.5	0.15781\\
0.304	0.6	0.1076\\
0.304	0.7	0.03099\\
0.304	0.8	0.01413\\
0.304	0.9	0.00369\\
0.304	1	0.00113\\
0.402	0	0.29708\\
0.402	0.1	0.27772\\
0.402	0.2	0.25319\\
0.402	0.3	0.2288\\
0.402	0.4	0.19613\\
0.402	0.5	0.1553\\
0.402	0.6	0.0945\\
0.402	0.7	0.04131\\
0.402	0.8	0.01298\\
0.402	0.9	0.00493\\
0.402	1	0.00226\\
0.5	0	0.29645\\
0.5	0.1	0.27392\\
0.5	0.2	0.25403\\
0.5	0.3	0.22432\\
0.5	0.4	0.19374\\
0.5	0.5	0.16719\\
0.5	0.6	0.10905\\
0.5	0.7	0.0452\\
0.5	0.8	0.01459\\
0.5	0.9	0.00733\\
0.5	1	0.00232\\
0.598	0	0.28746\\
0.598	0.1	0.26992\\
0.598	0.2	0.24897\\
0.598	0.3	0.22824\\
0.598	0.4	0.19712\\
0.598	0.5	0.15607\\
0.598	0.6	0.10646\\
0.598	0.7	0.03764\\
0.598	0.8	0.02231\\
0.598	0.9	0.00797\\
0.598	1	0.00606\\
0.696	0	0.2829\\
0.696	0.1	0.2607\\
0.696	0.2	0.24437\\
0.696	0.3	0.22174\\
0.696	0.4	0.19239\\
0.696	0.5	0.15802\\
0.696	0.6	0.10647\\
0.696	0.7	0.04858\\
0.696	0.8	0.0151\\
0.696	0.9	0.01123\\
0.696	1	0.01361\\
0.794	0	0.27286\\
0.794	0.1	0.25406\\
0.794	0.2	0.23243\\
0.794	0.3	0.21563\\
0.794	0.4	0.19289\\
0.794	0.5	0.15286\\
0.794	0.6	0.09993\\
0.794	0.7	0.03439\\
0.794	0.8	0.01918\\
0.794	0.9	0.01954\\
0.794	1	0.01084\\
0.892	0	0.24679\\
0.892	0.1	0.2338\\
0.892	0.2	0.22012\\
0.892	0.3	0.20406\\
0.892	0.4	0.18048\\
0.892	0.5	0.15207\\
0.892	0.6	0.10219\\
0.892	0.7	0.0338\\
0.892	0.8	0.03005\\
0.892	0.9	0.02698\\
0.892	1	0.01743\\
0.99	0	0.18101\\
0.99	0.1	0.16691\\
0.99	0.2	0.17216\\
0.99	0.3	0.15403\\
0.99	0.4	0.12345\\
0.99	0.5	0.11125\\
0.99	0.6	0.07803\\
0.99	0.7	0.03394\\
0.99	0.8	0.03536\\
0.99	0.9	0.03583\\
0.99	1	0.03412\\
};

\end{axis}

\end{tikzpicture}
  \subfloat[$\sigma_{\mathrm{v}}=0.7$]{\begin{tikzpicture}
\begin{axis}[%
colormap={mymap}{[1pt] rgb(0pt)=(1,1,1); rgb(127pt)=(1,0,0); rgb(255pt)=(.5,0,0)},
width=\qq cm,
height=\qq cm,
scale only axis,
xmin=0,
xmax=1,
ymin=0,
point meta max=.4,
point meta min=0,
ymax=1,
axis background/.style={fill=white},
axis x line*=bottom,
axis y line*=left,
xlabel={homophily $\theta$},
ylabel={responsibility $\sigma_{\mathrm{n}}$},
xmajorgrids,
ymajorgrids,colorbar,
colorbar style={at={(1.15,1)}, width=.2cm, yticklabel style={
        /pgf/number format/fixed,
        /pgf/number format/precision=5
},},
]

\addplot[%
surf,
shader=flat, draw=none, mesh/rows=11]
table[row sep=crcr, point meta=\thisrow{c}] {%
x	y	c\\
0.01	0	0.00877\\
0.01	0.1	0.00119\\
0.01	0.2	0\\
0.01	0.3	0\\
0.01	0.4	2e-05\\
0.01	0.5	0\\
0.01	0.6	0\\
0.01	0.7	0\\
0.01	0.8	0\\
0.01	0.9	0\\
0.01	1	0\\
0.108	0	0.06619\\
0.108	0.1	0.01656\\
0.108	0.2	0.00048\\
0.108	0.3	0\\
0.108	0.4	2e-05\\
0.108	0.5	0\\
0.108	0.6	0\\
0.108	0.7	0\\
0.108	0.8	0\\
0.108	0.9	0\\
0.108	1	0\\
0.206	0	0.12599\\
0.206	0.1	0.06489\\
0.206	0.2	0.02866\\
0.206	0.3	0.00623\\
0.206	0.4	0.00099\\
0.206	0.5	0\\
0.206	0.6	0\\
0.206	0.7	0\\
0.206	0.8	0\\
0.206	0.9	0\\
0.206	1	0\\
0.304	0	0.15116\\
0.304	0.1	0.09965\\
0.304	0.2	0.07801\\
0.304	0.3	0.02277\\
0.304	0.4	0.00142\\
0.304	0.5	0.00019\\
0.304	0.6	0\\
0.304	0.7	0\\
0.304	0.8	0\\
0.304	0.9	0\\
0.304	1	0\\
0.402	0	0.16283\\
0.402	0.1	0.12945\\
0.402	0.2	0.08991\\
0.402	0.3	0.06714\\
0.402	0.4	0.01219\\
0.402	0.5	0\\
0.402	0.6	0\\
0.402	0.7	0\\
0.402	0.8	0\\
0.402	0.9	0\\
0.402	1	0\\
0.5	0	0.17077\\
0.5	0.1	0.13349\\
0.5	0.2	0.11177\\
0.5	0.3	0.09129\\
0.5	0.4	0.05936\\
0.5	0.5	0.0039\\
0.5	0.6	2e-05\\
0.5	0.7	0\\
0.5	0.8	0\\
0.5	0.9	0\\
0.5	1	0\\
0.598	0	0.17241\\
0.598	0.1	0.1394\\
0.598	0.2	0.09252\\
0.598	0.3	0.08822\\
0.598	0.4	0.06344\\
0.598	0.5	0.0281\\
0.598	0.6	0.00026\\
0.598	0.7	0\\
0.598	0.8	0\\
0.598	0.9	0\\
0.598	1	0\\
0.696	0	0.17371\\
0.696	0.1	0.12192\\
0.696	0.2	0.12402\\
0.696	0.3	0.09616\\
0.696	0.4	0.08406\\
0.696	0.5	0.03274\\
0.696	0.6	0.00025\\
0.696	0.7	0\\
0.696	0.8	0\\
0.696	0.9	0\\
0.696	1	0\\
0.794	0	0.14381\\
0.794	0.1	0.13304\\
0.794	0.2	0.10696\\
0.794	0.3	0.07127\\
0.794	0.4	0.06628\\
0.794	0.5	0.02704\\
0.794	0.6	0.00201\\
0.794	0.7	0\\
0.794	0.8	0\\
0.794	0.9	0\\
0.794	1	0\\
0.892	0	0.13309\\
0.892	0.1	0.09738\\
0.892	0.2	0.09038\\
0.892	0.3	0.09095\\
0.892	0.4	0.06812\\
0.892	0.5	0.03134\\
0.892	0.6	0.00955\\
0.892	0.7	0\\
0.892	0.8	0\\
0.892	0.9	0\\
0.892	1	0\\
0.99	0	0.03909\\
0.99	0.1	0\\
0.99	0.2	0.01149\\
0.99	0.3	0\\
0.99	0.4	0\\
0.99	0.5	0.00775\\
0.99	0.6	0\\
0.99	0.7	0\\
0.99	0.8	0\\
0.99	0.9	0\\
0.99	1	0\\
};

\end{axis}

\end{tikzpicture}
 \caption{(a,b) The epidemic threshold, estimated numerically for the COVID-inspired case study with a backbone network; and (b,c) the total fraction of infected individuals ($I_{\mathrm{v}}(t)+I_{\mathrm{n}}(t)+Q_{\mathrm{v}}(t)+Q_{\mathrm{n}}(t)$) at $T=200$, for different values of the model parameters. Common parameters are summarized in Table~\ref{tab:covid}.}
 \label{fig:network}
\end{figure}

Our simulation results in Fig.~\ref{fig:network} suggest that i) responsibility of individuals is crucial: for low levels of responsibility, it is impossible to eradicate the disease without resorting to massive testing campaigns (Fig.~\ref{fig:network}c); ii) the role of homophily, already important in the absence of backbone networks, becomes even more critical, in particular when vaccinated individuals have a higher responsibility level than non-vaccinated ones (Fig.~\ref{fig:network}b,d). This can, e.g., be the case when non-vaccinated individuals belong to a minority of conspiracy theorists, since COVID-19-related conspiracy belief is negatively correlated with the willingness to vaccinate and infection-preventive behavior~\cite{conspiracyandvaccinewillingnessplusbehaviour}; iii) The responsibility level of vaccinated individuals seems to have a moderate role on the epidemic threshold. It has some effect only if non-vaccinated people are behaving responsibly and homophily is moderate (Fig.~\ref{fig:network}a,b). 
When above the threshold, however, we can observe by comparing Fig.~\ref{fig:network}c and Fig.~\ref{fig:network}d that the responsibility of vaccinated individuals plays a paramount role in reducing the fraction of infected individuals.

\section{Conclusion}\label{Sec:conclusion}
\noindent We proposed a polarized temporal network model for the spread of recurrent epidemic infections and investigated the effect of homophily, human behavior, and vaccination campaigns on the infection prevalence and the control of local outbreaks. Via a mean-field approach, we analytically derived the epidemic threshold. To include preferential contacts in the framework, we ran numerical simulations while confining interactions to an underlying network structure. For the simulations, the parameter values were calibrated to a case study on COVID-19. Our results suggest that, while vaccination is a very powerful measure to mitigate the number of deaths and the pressure on hospitals, its effect on the control of local outbreaks is nontrivial. Dependent on the characteristics of both the infection and the vaccine in question, vaccination campaigns might act as double-edged swords; they cut down the pressure on hospitals, but possibly also impede the control of local outbreaks. Analytical and numerical findings suggest that, in these scenarios, two paths are possible to completely eradicate the disease: either by relying on the population's responsibility or by resorting to testing on a massive scale. Furthermore, our simulations showed that a polarized network structure with a high degree of homophily presents a critical barricade in the control of local outbreaks.

Despite the generality of our modeling framework, some limitations should be highlighted. In particular, here we assumed that the decisions of the individuals on whether to vaccinate are fixed and made a priori. This could make sense for some infections, e.g., influenza viruses, but it neglects to consider scenarios in which vaccination decisions dynamically change. Moreover, we considered a situation in which individuals who are in favor of vaccination have already been vaccinated. Our model could be extended by considering changing views over time, following ~\cite{bauch2004}, for instance, and by introducing vaccine administration during an epidemic outbreak, as in~\cite{parino2021}. 

Besides incorporating these features into the model, several other research directions may be highlighted. First, our numerical results suggest that the threshold phenomenon proved analytically in the assumption of a complete backbone network, is an inherent feature of the epidemic model. Future efforts should be placed toward extending our analytical results. Second, one could extend the analytical treatment of the system beyond the stability of the disease-free equilibrium, by studying the behavior of the system above the epidemic threshold and by estimating the endemic equilibrium, similar to~\cite{frieswijk2022ecc}. Third, the model should be further extended by including extra compartments to capture, e.g., latency periods and (temporary) immunity. Finally, we plan to introduce a game-theoretic decision-making process in the model to capture the process through which mildly symptomatic individuals decide whether to maintain physical distance from others or not, as a function of the epidemic spreading and, potentially, other external factors~\cite{ye2021game,frieswijk2022mean}.

\bibliographystyle{ieeetr}
\bibliography{bib}


\appendices

\section{Proof of Theorem~\ref{theo1}}\label{prooftheo1}
\noindent Observe that the DFE of \eqref{y} is the state $( y_{\mathrm{n},\mathrm{s}} \ y_{\mathrm{n},\mathrm{i}} \ y_{\mathrm{n},\mathrm{q}} \ y_{\mathrm{v},\mathrm{s}} \ y_{\mathrm{v},\mathrm{i}} \ y_{\mathrm{v},\mathrm{q}} ) = (1-v,0,0,v, 0, 0)$, which is an equilibrium, since the right-hand side of \eqref{y} is equal to zero in this state. To study the local stability of the DFE, we recall that only four of the six equations of system \eqref{y} are linearly independent (Remark~\ref{rem:4}). In our analysis, we reduce the system to a 4-dimensional system, by choosing the macroscopic variables $y_{\mathrm{n},\mathrm{i}}$, $y_{\mathrm{n},\mathrm{q}}$, $y_{\mathrm{v},\mathrm{i}}$, and $y_{\mathrm{v},\mathrm{q}}$, and subsequently linearize \eqref{y} around the DFE of the original system (which coincides with the origin of the 4-dimensional reduced one), yielding
\begin{align}\label{eq:macro}
 \dot{y}_{\mathrm{n},\mathrm{i}} \hspace{1pt} = & \left[ 2  \lambda (1-p_{\mathrm{q}})  [ \theta + (1 -\theta)(1-v)](1-\sigma_{\mathrm{n}}) - \beta - \tau \right] y_{\mathrm{n},\mathrm{i}}  + 2 \lambda (1-p_{\mathrm{q}}) (1-\theta)(1-v)(1-\sigma_{\mathrm{v}}) y_{\mathrm{v},\mathrm{i}} , \notag\\
 \dot{y}_{\mathrm{v},\mathrm{i}} \hspace{1pt} = & 2 \lambda (1-\gamma_{\mathrm{t}}) [1-p_{\mathrm{q}}(1-\gamma_{\mathrm{q}})] v(1-\theta)(1-\sigma_{\mathrm{n}}) y_{\mathrm{n},\mathrm{i}}  + \left[ 2  \lambda (1-\gamma_{\mathrm{t}}) [1-p_{\mathrm{q}}(1-\gamma_{\mathrm{q}})]  [ \theta + v(1 -\theta) ](1-\sigma_{\mathrm{v}}) -\beta - \tau \right] y_{\mathrm{v},\mathrm{i}} \notag \\
 \dot{y}_{\mathrm{n},\mathrm{q}} = & \left[ 2  \lambda p_{\mathrm{q}}  [ \theta + (1 -\theta)(1-v)](1-\sigma_{\mathrm{n}})  + \tau \right] y_{\mathrm{n},\mathrm{i}}  +  2\lambda p_{\mathrm{q}} (1-\theta)(1-v)(1-\sigma_{\mathrm{v}}) y_{\mathrm{v},\mathrm{i}}  - \beta y_{\mathrm{n},\mathrm{q}}, \\
 \dot{y}_{\mathrm{v},\mathrm{q}} \hspace{1pt}= & 2 \lambda (1-\gamma_{\mathrm{t}}) p_{\mathrm{q}}(1-\gamma_{\mathrm{q}}) v (1-\theta)(1-\sigma_{\mathrm{n}}) y_{\mathrm{n},\mathrm{i}}  + \left[2  \lambda (1-\gamma_{\mathrm{t}}) p_{\mathrm{q}}(1-\gamma_{\mathrm{q}})  [ \theta + v(1 -\theta) ](1-\sigma_{\mathrm{v}})+ \tau  \right] y_{\mathrm{v},\mathrm{i}} - \beta y_{\mathrm{v},\mathrm{q}}. \notag
\end{align}
According to standard system-theoretic methods~\cite{rugh1996linear}, the (local) stability of the DFE is fully determined by the eigenvalues of the Jacobian matrix of~\eqref{eq:macro} evaluated at the origin. After re-sorting the equations in the order $y_{\mathrm{n},\mathrm{q}}$, $y_{\mathrm{v},\mathrm{q}}$, $y_{\mathrm{n},\mathrm{i}}$, $y_{\mathrm{v},\mathrm{i}}$, we observe that the Jacobian of \eqref{eq:macro} has the following block-triangular structure:
\begin{equation}\label{eq:jacobian}
 \begin{bNiceMatrix}[first-row,first-col]
 & y_{\mathrm{n},\mathrm{q}} & y_{\mathrm{v},\mathrm{q}} & y_{\mathrm{n},\mathrm{i}} & y_{\mathrm{v},\mathrm{i}} \\
y_{\mathrm{n},\mathrm{q}} & \ast & 0 & \ast &\ast\\
y_{\mathrm{v},\mathrm{q}} &0 & \ast & \ast &*\\
y_{\mathrm{n},\mathrm{i}} &0 & 0 & \ast &\ast\\
y_{\mathrm{v},\mathrm{i}} & 0 & 0 & \ast &\ast\\
\end{bNiceMatrix},
\end{equation}
where an asterisk ($\ast$) is used to denote a nonzero entry. 
The block ($y_{\mathrm{n},\mathrm{q}},y_{\mathrm{v},\mathrm{q}}$) is diagonal and associated with eigenvalue $-\beta<0$, with multiplicity $2$. Through a direct computation, we establish that the eigenvalues of the block ($y_{\mathrm{n},\mathrm{i}},y_{\mathrm{v},\mathrm{i}}$) are given by
\begin{align}\label{eigenvalue}
 & \lambda \xi - \beta - \tau \pm \lambda \sqrt{ \xi^2 - 4 \theta (1-\gamma_{\mathrm{t}})[1-p_{\mathrm{q}}][1-p_{\mathrm{q}}(1-\gamma_{\mathrm{q}})] (1-\sigma_{\mathrm{n}})(1-\sigma_{\mathrm{v}}) },
\end{align}
with $\xi$ defined as in \eqref{xi}. Observe that 
\begin{align}\label{polynomial}
 \xi^2 & - 4 \theta (1-\gamma_{\mathrm{t}})[1-p_{\mathrm{q}}][1-p_{\mathrm{q}}(1-\gamma_{\mathrm{q}})] (1-\sigma_{\mathrm{n}})(1-\sigma_{\mathrm{v}}) \notag\\
& \hspace{2cm} = \theta^2 \left[ v \rho +(1-v) \phi\right]^2 + 2\theta \left[ v(1-v)(\phi - \rho )^2 - \phi \rho \right] + \left[(1-v) \rho +v \phi \right]^2, 
\end{align}
where the right-hand side is a polynomial in $\theta$, and where $\phi : = (1-\gamma_{\mathrm{t}})[1-p_{\mathrm{q}}(1-\gamma_{\mathrm{q}})](1-\sigma_{\mathrm{v}})$ and $\rho : = [1-p_{\mathrm{q}}](1-\sigma_{\mathrm{n}})$. The roots of \eqref{polynomial} are complex and are given by 
$$ \frac{-v(1-v) (\phi- \rho)^2 + \phi \rho \pm 2 i \sqrt{v(1-v) \phi \rho (\phi - \rho)^2}}{(v \rho +(1-v)\phi)^2}.$$ Since the leading coefficient of \eqref{polynomial} is positive, and the roots of \eqref{polynomial} are complex, it follows that \eqref{polynomial} is strictly positive for all $\theta$. Thus, the two eigenvalues in \eqref{eigenvalue} are real, and the DFE is locally asymptotically stable if and only if the maximum eigenvalue (i.e., the one with the positive sign in \eqref{eigenvalue}) is negative, which is achieved if $\tau > \bar{\tau}$. On the contrary, if $\tau < \bar{\tau}$, the maximum eigenvalue of the Laplacian is positive, and the DFE is unstable.\qed

\section{Proof of Proposition~\ref{prop:monotonicity}}\label{app:monotonicity}
\noindent All the statements are derived from observing monotonicity properties of~\eqref{eq:macro} with respect to the considered parameters. For $p_{\mathrm{q}}$ and $\gamma_{\mathrm{t}}$, monotonicity holds only for the equations $\dot y_{\mathrm{n},\mathrm{i}}$ and $\dot y_{\mathrm{v},\mathrm{i}}$, which, however, are the two that determine the stability of the DFE due to the block-triangular structure of the Jacobian in \eqref{eq:jacobian}.
\qed

\section{Details on numerical simulations}\label{app:numerics}
\noindent The epidemic threshold is estimated as follows. We set a range of values for the parameter $\tau$. For each value of $\tau$, we initialize the epidemics with $10$ infected individuals, and we estimate the probability that the disease is extinguished within a fixed time-horizon (we set it to $T=200$) through $10$ independent Monte Carlo simulations. Following~\cite{Moinet2017pre,burstiness}, the estimation of the threshold is the value of $\tau$ that maximizes the standard deviation of the eradication probability. Because the output of each simulation is a binary variable ('$0$' for eradication, `$1$' for endemicity), the value of $\tau$ that maximizes the standard deviation coincides with the value with estimated eradication probability close to $0.5$. To optimize the process, we adopt a two-step procedure. First, we consider a wide range of values of $\tau$ with a large step size $\Delta \tau$, and we estimate the eradication probability starting from a high value of $\tau$ and subsequently decrease it, until we reach a value $\tilde{\tau}$ with an estimated eradication probability smaller than $0.5$. Secondly, we use the proposed algorithm to estimate the threshold in the range $\tau\in[\tilde\tau-\Delta\tau,\tilde\tau+\Delta\tau]$, while choosing a smaller step size $\Delta\tau$. The code used for all our simulations is freely available at \url{https://github.com/lzino90/vaccine_siq}.

\end{document}